\definecolor{darkgreen}{rgb}{0,0.5,0}
\definecolor{darkred}{rgb}{0.7,0,0}
\DeclareMathAlphabet{\mathcal}{OMS}{cmsy}{m}{n}
\newtheorem{theorem}{Theorem}[section]
\newtheorem*{theorem*}{Theorem}
\newtheorem{lemma}[theorem]{Lemma}
\newtheorem{proposition}[theorem]{Proposition}
\newtheorem{definition}[theorem]{Definition}
\newcommand\numberthis{\addtocounter{equation}{1}\tag{\theequation}}
\newcommand\RedeclareMathOperator{%
	\@ifstar{\def\rmo@s{m}\rmo@redeclare}{\def\rmo@s{o}\rmo@redeclare}%
}
\newcommand\rmo@redeclare[2]{%
	\begingroup \escapechar\m@ne\xdef\@gtempa{{\string#1}}\endgroup
	\expandafter\@ifundefined\@gtempa
	{\@latex@error{\noexpand#1undefined}\@ehc}%
	\relax
	\expandafter\rmo@declmathop\rmo@s{#1}{#2}}
\newcommand\rmo@declmathop[3]{%
	\DeclareRobustCommand{#2}{\qopname\newmcodes@#1{#3}}%
}
\newcommand{\al}{\alpha}
\newcommand{\de}{\partial}
\newcommand{\lbar}{\overline}
\newcommand{\td}{\widetilde}
\newcommand{\bsl}{\backslash}
\renewcommand{\bf}{\mathbf}
\newcommand{\wlim}{\overset{w}{\rightharpoonup}}
\newcommand{\wslim}{\overset{w^*}{\rightharpoonup}}
\renewcommand{\L}{\mathcal L}
\newcommand{\ve}{\varepsilon}
\newcommand{\vf}{\varphi}
\newcommand{\oo}{\infty}
\newcommand{\ps}{\psi}
\newcommand{\et}{\eta}
\renewcommand{\th}{\theta}
\newcommand{\Ph}{\Phi}
\newcommand{\om}{\Omega}
\newcommand{\dt}{\delta}
\newcommand{\p}{\rho}
\newcommand{\B}{\mathcal B}
\newcommand{\M}{\ensuremath{{\mathcal M}}}
\newcommand{\R}{\Bbb R}
\DeclareMathOperator{\dist}{dist}
\DeclareMathOperator{\cof}{cof}
\RedeclareMathOperator{\div}{div}
\DeclareMathOperator{\sgn}{sgn}
\newcommand{\res}[1]{\left.#1\right\vert}
\newcommand{\Hd}[1]{\ensuremath{{\mathcal H^{#1}}}}
\newcommand{\W}[1]{\textup{W}^{#1}}
\newcommand{\Leb}[1]{\textup{L}^{#1}}
\numberwithin{equation}{section}
\def\Xint#1{\mathchoice
	{\XXint\displaystyle\textstyle{#1}}%
	{\XXint\textstyle\scriptstyle{#1}}%
	{\XXint\scriptstyle\scriptscriptstyle{#1}}%
	{\XXint\scriptscriptstyle\scriptscriptstyle{#1}}%
	\!\int}
\def\XXint#1#2#3{{\setbox0=\hbox{$#1{#2#3}{\int}$ }
		\vcenter{\hbox{$#2#3$ }}\kern-.6\wd0}}
\def\dashint{\Xint-}
\title{\vspace{-2cm}The positivity of the Jacobian in the weak limit of generalised axisymmetric maps}
\author{Duvan Henao$^{1,2}$, \and Panas Kalayanamit$^{1}$}
\date{%
\footnotesize{$^1$Instituto de Ciencias de la Ingenier\'ia, Universidad de O'Higgins,\\ Av.\ Bernardo O'Higgins 611, Rancagua, Chile.} \\ %
\footnotesize{$^2$Center for Mathematical Modeling, FB210005, Basal ANID Chile.} %
}
\begin{document}
	
	\maketitle
	
\section{Introduction}	
	
 \paragraph{}	
 In the calculus of variations, one often encounters situations where the weak limit $u$ of a sequence of maps $(u_j)_j$ loses some desirable properties that each $u_j$ has, which leads to complications such as the lack of compactness for the admissible space to a variational problem. An important example is that weak $\W{1,p}$ limit does not preserve the positivity of the Jacobians in general. The objective of this work is to show that the techniques Hencl and Onninen \cite{HenclOnninen2018} developed for sequences of Sobolev homeomorphisms can be adapted to show the positivity of the Jacobians in the weak limit of sequences of maps $u_j \in \W{1,2}(\om;\R^3)$ that are not necessarily continuous, provided that some conditions are satisfied. In particular, we show that generalised axisymmetric maps satisfy these conditions, hence the weak limit $u$ of a sequence $(u_j)_j$ of generalised axisymmetric maps satisfies $\det Du \ge 0$ a.e., provided that $\det Du_j > 0$ a.e.
 
 The positivity of the Jacobian of a map $u\colon\om \to \R^3$ is a natural assumption in nonlinear elasticity, where $u$ represents a physical deformation of an elastic body whose reference configuration is represented by an open domain $\om \subset \R^3$ \cite{Ball1977, MarsdenHughes1983, Ciarlet1988, Sverak1988}. The energy of a deformation $u$ is given by some polyconvex energy, e.g.\ the neo-Hookean energy
 \[
 E(u) = \int_\om |Du(\bf x)|^2 + H(\det Du(\bf x)) \,d\bf x,
 \]
 where $H(J) \to +\oo$ as $J\searrow 0$ and $H(J) = +\oo$ for $J<0$. The set of points $\{ \bf x \in \om: \det Du(\bf x) \le 0 \}$ thus corresponds to the part of the elastic body in which infinite compression or orientation-reversal occur, which are non-physical, hence only deformations in $\W{1,2}(\om;\R^3)$ with (strictly) positive Jacobian a.e. are admissible. One of the main challenges in finding a minimizer of $E$ using the Direct Method is the fact that, in general, a sequence $(u_j)_j$ in $\W{1,p}$, where each $u_j$ satisfies $\det D u_j > 0$ a.e., may converge weakly to some $u$ such that $\det Du < 0$ on a set of positive measure. Indeed, the construction in \cite[Chapter 8.5]{IwaniecMartin2001} shows that there exists a sequence of $u_j\in \W{1,2}(\om;\R^3)$ such that $\det Du_j > 0$ a.e., but $u_j \wlim u$ where $\det Du = -1$ on the whole $\om$. The same phenomenon persists even when each $u_j$ is a continuous map (see the construction in \cite{Maly1995}). However, an important result by Hencl and Onninen \cite{HenclOnninen2018} shows that the sign of the Jacobian cannot reverse in the weak limit when each $u_j$ is a \emph{homeomorphism} in $\W{1,2}(\om;\R^3)$ with $\det Du_j > 0$ a.e.\ (the limit map $u$ need not be a homeomorphism). Their result means that the weak closure of the class of Sobolev homeomorphisms with positive Jacobian (provided that some extra conditions are satisfied) is a good function space to find a minimizer of $E$ on (as was done in \cite{DolezalovaHenclMolochanova2023} and \cite{Kalayanamit2024}). 
 
 Aside from the compactness issue discussed above, the sign-reversal of the Jacobian in the limit is also an obstruction to proving the weak convergence of $(\det Du_j)_j$, and hence to proving the lower semicontinuity of the energy functional $E$ \cite{BallCurrieOlver1981}. In many cases, under some standard assumptions in nonlinear elasticity, one can show that  
 \[
 \det Du_j \wlim |\det Du|  \quad\text{in }\Leb{1}(\om;\R^{3\times 3}),
 \]
 when $u_j \wlim u$ in $\W{1,p}$, but removing the absolute signs to get the full weak convergence result for the Jacobians is often very difficult unless one knows that $\det Du \ge 0$ a.e. Examples of results of the above type are \cite[Theorem 4.1]{MullerSpector1995} and \cite[Theorem 2]{HenaoMora-Corral2010}. In \cite{MullerSpector1995}, one may deduce $\det Du_j \wlim \det Du$ in $\Leb{1}$ by further assuming that $p>2$ and that condition (INV) holds  (see \cite[Theorem 4.2]{MullerSpector1995}), whereas in \cite{HenaoMora-Corral2010} the full weak convergence of the Jacobians was achieved by assuming that $\cof Du_j \wlim \cof Du$ in $\Leb{1}$ and that the divergence identities \eqref{div_identity} hold (or a less restrictive condition that the sequence $(u_j)_j$ has uniformly bounded surface energy, see \cite[Theorem 3]{HenaoMora-Corral2010}). There are other results in the literature that guarantee the full convergence of the Jacobian, such as a very general result by Fonseca--Leoni--Mal\'y \cite{FonsecaLeoniMaly2005}, but one needs to assume extra integrability on the cofactors, i.e.\ that $\cof Du_j \in \Leb{\frac{3}{2}}$, which does not necessarily hold for $u_j \in \W{1,2}(\om;\R^3)$. 
 
 In this work, we expand on the idea of Hencl and Onninen in \cite{HenclOnninen2018} by showing that much of the techniques used in their paper, originally used for sequences of Sobolev homeomorphisms, can in fact be adapted to a more general type of maps. The essential ingredient in their work is the ingenious usage of the \emph{linking number}, which is inherently a topological notion, to analyse the weak convergence of Sobolev maps (in a prior work \cite{HenclMaly2010}, the linking number was used to show that sense-preserving Sobolev homeomorphisms have positive Jacobian a.e.). For other recent works that use the linking number to analyse properties of Sobolev maps, see \cite{GoldsteinHajlasz2019} and \cite{BouchalaHenclZhu2024}. 
 
 This paper shows that the analysis of the linking number leading to the positivity of $\det Du$ in \cite{HenclOnninen2018} can be extended to spaces of Sobolev maps that are not homeomorphisms. Specifically, we prove that:  
 
 \begin{theorem}\label{thm_main}
 	Let $\om\subset \R^3$ be a bounded domain, $(u_j)_j$ be a sequence of $\W{1,2}(\om;\R^3)$ generalised axisymmetric maps that are one-to-one a.e.\ and satisfy $\det Du_j > 0$ a.e. Suppose that $u_j \wlim u$ for some $u\in\W{1,2}(\om;\R^3)$, then $\det Du \ge 0$ a.e.
 \end{theorem}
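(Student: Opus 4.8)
The plan is to follow the Hencl–Onninen strategy, which is built on the linking number, but to replace the role of global injectivity (available for homeomorphisms) with the a.e.-injectivity together with the special structure of generalised axisymmetric maps. First I would set up the notation for axisymmetric maps: writing $\bf x = (x_1,x_2,x_3)$ in cylindrical coordinates $(r,\theta,x_3)$ with $r=\sqrt{x_1^2+x_2^2}$, a generalised axisymmetric map has the form $u(\bf x) = (\vf(r,x_3)\tfrac{x_1}{r}, \vf(r,x_3)\tfrac{x_2}{r}, \psi(r,x_3))$ for a planar map $v=(\vf,\psi)$ on the half-plane $\{(r,x_3): r>0\}$. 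The key reduction is that $\det Du(\bf x) = \tfrac{\vf(r,x_3)}{r}\det Dv(r,x_3)$, so the positivity of $\det Du_j$ translates into a sign condition on $\vf_j\det Dv_j$, and weak $\W{1,2}$ convergence of $u_j$ passes to (a form of) weak convergence of the planar maps $v_j$. This converts the three-dimensional problem into a two-dimensional one about planar maps with a weight $\tfrac{1}{r}$, at the cost of the degeneracy at $r=0$.

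Next I would argue, as in \cite{HenclOnninen2018}, by contradiction: suppose $\det Du < 0$ on a set of positive measure. By Lebesgue-point and Egorov-type arguments one localises to a ball $B$ around a good point $\bf x_0$ where $Du(\bf x_0)$ exists, is invertible with negative determinant, and where $u$ is approximately differentiable with good control; one also wants $\bf x_0$ away from the axis $r=0$ so that the cylindrical change of variables is nondegenerate there. On a suitable small sphere (or a bi-Lipschitz image of one) centred near $\bf x_0$, the map $u$ is close in $\W{1,2}$ (hence, after slicing, in $\W{1,2}$ and uniformly on a.e.\ two-dimensional sphere) to the affine map $\bf x \mapsto u(\bf x_0) + Du(\bf x_0)(\bf x - \bf x_0)$, which is orientation-reversing. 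The heart of the matter is then the linking-number computation: one picks two small linked circles $\Gamma_1, \Gamma_2$ near $\bf x_0$ (a Hopf-type link) such that $\ell k(\Gamma_1,\Gamma_2)=1$ and such that, for the affine orientation-reversing map, $\ell k(u(\Gamma_1), u(\Gamma_2)) = -1$; by the $\W{1,2}$-closeness and the continuity/stability of the linking number under the relevant convergence (this is exactly the technical engine of \cite{HenclOnninen2018}, where the linking number of images of circles under $\W{1,2}$ maps is defined via the Brouwer degree / an integral formula and shown to be stable under weak convergence after passing to good slices), one gets $\ell k(u(\Gamma_1), u(\Gamma_2)) = -1$ for the limit $u$ as well. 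On the other hand, because each $u_j$ is one-to-one a.e.\ and sense-preserving with $\det Du_j>0$, the images $u_j(\Gamma_1)$ and $u_j(\Gamma_2)$ cannot be negatively linked in the limit — here one uses the axisymmetric structure to upgrade ``one-to-one a.e.'' to enough injectivity on slices (2D spheres or curves transverse to the axis) to run the degree-theoretic argument that a sense-preserving a.e.-injective map cannot reverse the sign of a linking number. This contradiction forces $\det Du \ge 0$ a.e.

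The main obstacle I expect is precisely this last point: transferring the Hencl–Onninen linking-number rigidity from homeomorphisms to generalised axisymmetric, a.e.-injective, possibly discontinuous Sobolev maps. For homeomorphisms the linking number of the image curves is controlled directly by injectivity and continuity; here one must instead (i) choose the circles $\Gamma_1,\Gamma_2$ compatibly with the axisymmetric structure — e.g.\ so that $u_j(\Gamma_i)$ can be computed from the planar maps $v_j$ and their degrees in the half-plane — and (ii) handle the lack of continuity and the degeneracy of the weight $\tfrac1r$ near the axis, making sure the bad set where $v_j$ fails to be well-behaved has small capacity so it can be avoided by a.e.\ choice of the slicing radius. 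A secondary difficulty is verifying that the notion of linking number used for $\W{1,2}$ curves (not necessarily embedded, as the limit need not be injective) is genuinely stable under the weak convergence $u_j \wlim u$ once restricted to generic good slices; this is where the Fubini-type and capacity estimates of \cite{HenclOnninen2018} must be re-derived in the axisymmetric setting, keeping track of the extra radial weight.
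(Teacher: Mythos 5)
Your overall architecture coincides with the paper's: argue by contradiction at a point of approximate differentiability where $\det Du(\bf x_0)<0$, use Fubini/coarea to select good linked circles on which $u_j$ is uniformly close to the orientation-reversing linear map (forcing linking number $-1$ for the image link), and contradict the fact that each $u_j$, being a.e.\ injective, orientation-preserving and axisymmetric, cannot produce negatively linked image curves. However, the step you yourself flag as ``the main obstacle'' --- that such a map does not reverse the sign of the linking number --- is left unresolved in your proposal, and it is precisely the paper's main contribution; without a concrete mechanism the proof is incomplete at its decisive point. The homeomorphism argument of Hencl--Mal\'y/Hencl--Onninen genuinely does not transfer: for a discontinuous, merely a.e.-injective map one cannot invoke topological invariance of the linking number, and ``upgrading injectivity on slices'' is not by itself a degree-theoretic argument.

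The paper's resolution has two ingredients you would need to supply. First, the local parametrization $L_{\bf x_0}$ of Definition \ref{defn_parametrization_L} deliberately swaps the angular and vertical cylindrical coordinates, so that each horizontal circle $\mu_{\bf a}(S^1)$ of the canonical link is carried by $T_r$ into a single half-plane $O_\th$, whose image under $u$ lies in the single half-plane $O_{\Theta(\th)}$; after a rotation, the restriction of $v=u\circ T_r$ to that plane is a planar map $w\in \W{1,2}$ with $\det Dw>0$ a.e., and Lemma \ref{lemma_linking_number_using_2D_degree} expresses $\L(v\circ\mu_{\bf a},v\circ\nu_{\bf b})$ as a difference of two planar winding numbers of $w$ (via intersection numbers, using $\de_3 v_3>0$ to fix the signs). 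Second, orientation-preserving planar $\W{1,2}$ maps automatically satisfy the divergence identities \eqref{div_identity}, so by \cite[Theorem 4.1]{BarchiesiHenaoMora-Corral2017} the degree of $w$ on a.e.\ circle counts preimages in $\Lambda_0$; a.e.\ injectivity makes these counts $1$ and $0$, whence the linking number equals $+1$. Note also that this yields the correct sign only for \emph{almost every} link in the chosen family (this is property \hyperref[defn_property_L]{(L)}), not for every link as your phrase ``cannot reverse the sign of a linking number'' suggests, so a separate verification (Theorem \ref{thm_main_for_property_L}) is needed that the a.e.\ version still closes the Hencl--Onninen contradiction. Finally, your normal form $u=(\vf(r,x_3)x_1/r,\,\vf(r,x_3)x_2/r,\,\psi(r,x_3))$ with $\det Du=\frac{\vf}{r}\det Dv$ is the classical axisymmetric case; the generalised maps of Definition \ref{defn_axisym} allow $\td u_1,\td u_2$ to depend on $\th$ and involve the reparametrization $\Theta$, and what the argument actually uses is the half-plane-to-half-plane property together with $\Theta'>0$, not $\th$-independence.
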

 
 As in \cite{HenclOnninen2018}, for the proof we need to show that the linking number of the canonical link is preserved under the composition with Sobolev maps in our class. Instead of relying on \cite[Proposition 4.1]{HenclMaly2010}, which does not apply in our case, we base ourselves on the fact that orientation-preserving generalised axisymmetric $\W{1,2}$ maps locally satisfy the divergence identities \eqref{div_identity} restricted to $2$-dimensional planes. The novelty of our work is recognising that linking number is useful in analysing Sobolev maps even for those that are not homeomorphisms, as all of the previous work that uses the linking number in this direction has that requirement.
 
 This paper is a steppingstone to a more general result that we wish to prove, which is removing the generalised axisymmetric assumption from the proof, and show that the divergence identities \eqref{div_identity} on the full $3$-dimensional space is enough to prove the positivity of $\det Du$. This more general statement, once proved, would significantly strengthen recent works on the minimization of the neo-Hookean energy  \cite{BarchiesiHenaoMora-CorralRodiac2023_relaxation} and \cite{Kalayanamit2024}, where the formation of the pathological harmonic dipoles \cite{ContiDeLellis2003, BarchiesiHenaoMora-CorralRodiac2023_harmonic, BarchiesiHenaoMora-CorralRodiac2024, DolezalovaHenclMaly2023} must be ruled out.

\section{Preliminaries}

 \paragraph{Notations:}
 For $n\in \Bbb N$, $\bf x\in \R^n$ and $r>0$, we let $B_r^n(\bf x)$ denotes the open ball in $\R^n$ of radius $r$ centred at $\bf x$. $B_r^n$ denotes the ball of radius $r$ centred at $0$, whereas $B^n$ denotes the unit ball in $\R^n$. The superscript $n$	will usually be dropped if $n = 3$. $\om$ will usually denote a bounded domain in $\R^3$, whereas $\Lambda$ will usually denote a bounded domain in $\R^2$.

 \paragraph{}
 For each point $\bf x = (x_1, x_2, x_3) \in \R^3\bsl\{ (0, 0, t) : t \in \R \}$, there exist a unique $r > 0$ and a unique $\th \in [0, 2\pi)$ such that $x_1 = r\cos\th$ and $x_2 = r\sin\th$. We shall write it as
 \[
 (x_1, x_2, x_3) = (r, \th, x_3)_{cyl}
 \]
 and call this representation the cylindrical coordinates of $\bf x$. We shall denote the (open) half-plane in $\R^3$ of angle $\th$ in the cylindrical coordinates by
 \[
 O_\th := \{ (r, \th, z)_{cyl} : r>0, z \in \R \}
 \]
 and the plane whose $3^{\text{rd}}$ coordinate is $z$ by
 \[
 H_z := \{ (x,y,z) : x,y \in \R \}.
 \]
 
 \begin{definition}\label{defn_axisym}
 	Let $\om \subset \R^3$ be an open domain. A map $u\colon\om \to \R^3$ is said to be a \emph{generalised axisymmetric map} if there exist a strictly increasing, absolutely continuous function $\Theta\colon[0,2\pi] \to \R$ such that $\Theta(2\pi) = \Theta(0) + 2\pi$ and $\Theta' > 0$ a.e., $\td u_1 \colon[0,\oo)\times [0,2\pi) \times \R \to [0,\oo)$ and $\td u_2 \colon[0,\oo)\times [0,2\pi) \times \R \to \R$ such that
 	\[
 	u(r\cos \th, r\sin \th, z) = \td u_1(r, \th, z)\left( \cos(\Theta(\th))\bf{e_1} + \sin(\Theta(\th))\bf{e_2}  \right)  + \td u_2(r, \th, z)\bf{e_3}
 	\] 
 	in the standard coordinates, or, equivalently ,
 	\[
 	u \colon (r,\th,z)_{cyl} \mapsto (\td u_1(r,\th,z), \Theta(\th), \td u_2(r,\th,z))_{cyl}
 	\]
 	in the cylindrical coordinates with $\de_\th \Theta >0$ a.e. This means that $u$ maps $O_\th$ into $O_{\Theta(\th)}$. In particular, when $\Theta(\th) = \th$, and $\td u_1$, $\td u_2$ do not depend on $\th$, $u$ is called an \emph{axisymmetric map}. 
 \end{definition}

 \paragraph{}
 We shall refer the readers to \cite{GiaquintaModicaSoucek1998_book}, \cite{Federer1996_GMT}, \cite{EvansGariepy1992_1st} or \cite{MullerSpector1995} for the definition and properties of the approximate derivative of a measurable function $w$ defined on (a subset of) $\R^n$. In particular, we shall use that if $w$ is approximately differentiable at $x_0$, then $w$ is defined and is approximately continuous at $x_0$. It is a well-known fact that a Sobolev function $w$ is approximately differentiable a.e. and its approximate differential coincides a.e.\ with its distributional derivative $Dw$. The set of approximate differentiability of a Sobolev function plays an important role in nonlinear elasticity.
 
 \begin{definition}\label{defn_approx_diff_set}
 	Let $\Lambda \subset \R^n$ be an open set. For a function $w \in \W{1,1}(\Lambda;\R^n)$, the set of approximate differentiability of $w$ is denoted by
 	\[
 	\Lambda_d := \{\bf x \in \Lambda:  w \textup{ is approximately differentiable at $\bf x$} \}
 	\]
 	and is a set of full measure in $\Lambda$. 
 	
 	Moreover, it is known\footnote{See \cite[Definition 3, Lemma 3]{HenaoMora-Corral2011} and also \cite[Lemma 3.4]{MullerSpector1995}.} that there exists a set $\Lambda_0 \subset \Lambda_d$ such that $\Lambda_d\bsl\Lambda_0$ is a null set and $\res{w}_{\Lambda_0}$ is one-to-one whenever $w$ is one-to-one a.e.\ and $\det Dw >0$ a.e.
 \end{definition}

 An important property of $\Lambda_d$ is that whenever $w \in \W{1,1}(\Lambda;\R^n)$ with $\det Dw > 0$ a.e. and $w$ is one-to-one a.e., we have  the \textit{change of variables formula}
 \[
 \int_E (\vf\circ w)(\bf x) \det Dw(\bf x) \,d\bf x = \int_{w(E\cap \Lambda_d)} \vf(\bf y) \,d\bf y
 \]
 for any measurable $E\subset \Lambda$ and any measurable function $\vf\colon \R^n \to \R$ (see \cite[Proposition 2.6]{MullerSpector1995}). In particular, this implies that $w(E\cap \Lambda_d)$ is null whenever $E$ is null. Note that we can replace $\Lambda_d$ in the above formula with $\Lambda_0$.

 \paragraph{}
 In this work, we shall use the following planar version of the divergence identities.
 \begin{definition}
 	Let $\Lambda \subset \R^2$ be an open domain. A Sobolev function $w\in \W{1,2}(\Lambda;\R^2)$ is said to satisfy the \textit{divergence identities} if
 	\begin{equation}\label{div_identity}
 		\int_{\Lambda} (\div g)(w(\bf x)) \phi(\bf x) \det Dw(\bf x) \,d\bf x = - \int_{\Lambda} g(w(\bf x)) \cdot (\cof Dw(\bf x))[D\phi(\bf x)] \,d\bf x
 	\end{equation}
 	for every $\phi\in C^1_c(\Lambda)$ and every $g\in C^1_c(\R^2;\R^2)$.
 \end{definition}
 These identities and their applications in nonlinear elasticity can be traced back to \cite{ GiaquintaGiuseppeSoucek1989,Mueller1988}.

 \paragraph{}
 In proving that the weak $\W{1,p}$ limit of orientation-preserving homeomorphisms has non-negative Jacobian a.e., Hencl and Onninen \cite{HenclOnninen2018} rely on the fact that these maps do not reverse the signs of the \emph{linking number} of every link in $\om$.\footnote{A link is an ordered pair of closed curves (see Section 4). Since the linking number is a topological invariant, a sense-preserving homeomorphism does not change the sign of the linking number of any link (see \cite[Proposition 4.1]{HenclMaly2010} or \cite[Proposition 51]{GoldsteinHajlasz2019} for the proof). } Here, we show that any generalised axisymmetric map that has positive Jacobian a.e.\ and is one-to-one a.e.\ satisfies a similar property, namely, it preserves the sign of a sufficiently large number of links in $\om$ (from Section 3 onwards we shall refer to this property as \emph{property \hyperref[defn_property_L]{(L)}}). Even though  the correct sign may not be obtained for every link, this is shown (in Theorem \ref{thm_main_for_property_L}) to be strong enough to carry out the argument in  \cite{HenclOnninen2018} and obtain that weak limits of Sobolev maps satisfying \hyperref[defn_property_L]{(L)} have non-negative a.e.\ Jacobian. That yields Theorem \ref{thm_main}.
  
 We now describe briefly the content of the rest of this paper. Section 3 is devoted to establishing estimates involving a family of $C^1$ maps that parametrize the balls $B_{4r}(\bf x_0)$ in $\om$. These estimates will be later used in Theorem \ref{thm_main_for_property_L} when we integrate on links in $B_{4r}(\bf x_0)$. Note that these parametrizations correspond to the affine transformations $\bf z \mapsto \bf x_0 + r \bf z$ that are implicitly used in \cite{HenclOnninen2018}. In this work, we require more complicated parametrizations since we wish to align our links so that one of the curves in each link lies in some half-plane $O_\th$.
 
 In Section 4, following the approach in \cite{HenclOnninen2018} we define the linking number $\L(\vf,\ps)$ of an arbitrary link $(\vf,\ps)$ and introduce a family of links $(\mu_{\bf a}, \nu_{\bf b} )$ that are para\-metrized by $\bf a\in B^{2}_{1/10}(\bf 0)$ and $\bf b \in B^{2}(\bf 0) \cap B^{2}_{1/10}(\bf e_1)$. These links satisfy $\L(\mu_{\bf a}, \nu_{\bf b}) = 1$ for every $\bf a$ and $\bf b$. We then give a precise definition of property \hyperref[defn_property_L]{(L)} at the end of this section. 
 
 In Section 5, we show how $\L(v \circ \mu_{\bf a}, v \circ \nu_{\bf b})$, the linking number of the link $(\mu_{\bf a}, \nu_{\bf b})$ under a mapping $v$, can be calculated. In particular, the parametrizations that are introduced in Section 3 and the generalised axisymmetric assumption allow us to establish a formula relating $\L(v \circ \mu_{\bf a}, v \circ \nu_{\bf b})$ to the $2$-dimensional topological degree (or the winding number) of an associated planar map (Lemma \ref{lemma_linking_number_using_2D_degree}). To this end, we make use of the \emph{intersection number}, a tool from differential topology, in deriving the formula. Lastly, we make use of a result from \cite{BarchiesiHenaoMora-Corral2017} that allow us to compute the degree of an orientation-preserving Sobolev map satisfying the divergence identities \eqref{div_identity}.
 
 Section 6 is where the main theorems are proved. We end this paper with a short appendix as Section 7.

\section{Parametrizations and related estimates}

 Recall that for a function $f\in C^1(U;\R^n)$ for $U \subset \R^n$, we have a simple estimate 
 \begin{align*}
 	\left| f(\bf v) - f(\bf v_0) - Df(\bf v_0)[\bf v] \right| &\le |\bf v| \sup_{t\in [0,1]} \left| Df(t\bf v) - Df(\bf v_0) \right| \\
 	 &\le |\bf v| \sup_{\bf w \in U} \left| Df(\bf w) - Df(\bf v_0) \right|.
 \end{align*}
 This will be used in the following lemma.

 \begin{lemma}\label{lemma_local_C^1_parametrization}
 	Let $L \in C^1(\om';\R^n)$, where $\om'$ is an open neighbourhood of $\bf x_0 \in \R^n$, be a diffeomorphism such that $L(\bf x_0) = 0$ and $\det DL(\bf 0)>0$. Then, there exist $\al,R,R'>0$ (with $R'<R$) such that the family $\{ T_r \}_{r\in (0,R']}$, where each $T_r\colon U_r \to B_{4r}(\bf x_0)$ is a of $C^1$-diffeomorphism defined by
 	\[
 	T_r(\bf z) := L^{-1}\left( \frac{\al r \bf z}{R} \right),\quad\text{where}\quad \quad U_r := \frac{R}{\al r} L(B_{4r}(\bf x_0)),
 	\]
 	has the following properties:
 		\begin{itemize}
 		\item[(i)]  Each $U_r$ is an open set in $\R^n$ such that $B_4(\bf 0) \subset U_r$ and $T_r(\bf 0) = \bf x_0$.
 		\item[(ii)] There exists a constant $c=c(\bf x_0)>0$ such that $\det DT_r(\bf z) \ge c^{-1} r^n$ uniformly for all $\bf z \in B_{4r}(\bf x_0)$ and all $r \le R'$.
 		\item[(iii)] There exists an invertible linear map $\bf A \in \Bbb M^{n \times n}$ with $\det \bf A>0$ such that $\mathbf A = \frac{DT_r(\bf 0)}{r}$ for all $r \le R'$,
 		\[
 		\sup_{\bf z\in B_{4}(\bf 0)} \left|\bf A - \frac{DT_r(\bf z)}{r} \right| \le \frac{|\bf A|}{2} \quad\text{and}\quad\sup_{\bf z\in B_{4}(\bf 0)} \left|\bf A - \frac{DT_r(\bf z)}{r} \right| \to 0 \quad\text{as}\quad r\to 0.
 		\]
 	\end{itemize}
 \end{lemma}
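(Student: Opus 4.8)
The statement is a direct verification, so the plan is to fix the constants $\al,R,R'$ and the matrix $\mathbf A$ explicitly in terms of $\mathbf x_0$ and $L$, and then check (i)--(iii) in turn using only the inverse function theorem (which gives $L^{-1}\in C^1(L(\om'))$), the chain rule, and elementary continuity/compactness estimates. Write $M:=DL(\mathbf x_0)$; since $L$ is a $C^1$-diffeomorphism, $M$ is invertible, and $\det M>0$ by hypothesis, so $\det DL>0$ on a neighbourhood of $\mathbf x_0$. Let $\sigma>0$ be the smallest singular value of $M$, so that $|M^{-1}[\mathbf w]|\le\sigma^{-1}|\mathbf w|$ for all $\mathbf w\in\R^n$. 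Since $L^{-1}(\mathbf 0)=\mathbf x_0$ and $DL^{-1}(\mathbf 0)=M^{-1}$, applying the elementary estimate recalled just before the lemma to $f=L^{-1}$, together with continuity of $DL^{-1}$ at $\mathbf 0$, produces $\rho_0>0$ with $\overline{B^n_{\rho_0}(\mathbf 0)}\subset L(\om')$ and
\[
|L^{-1}(\mathbf y)-\mathbf x_0|\le 2\sigma^{-1}|\mathbf y|\qquad\text{whenever }|\mathbf y|\le\rho_0 .
\]
I would then fix $R>0$ small enough that $\overline{B_{4R}(\mathbf x_0)}\subset\om'$ and $\det DL>0$ on $\overline{B_{4R}(\mathbf x_0)}$, set $\al:=R\sigma/4$ and $\mathbf A:=\tfrac{\al}{R}M^{-1}=\tfrac{\sigma}{4}M^{-1}$ (invertible, with $\det\mathbf A=(\sigma/4)^n(\det M)^{-1}>0$), and postpone the choice of $R'\in(0,R)$ until the uniform bounds in (ii)--(iii) dictate it.

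\textbf{Well-definedness and (i).} For $0<r\le R'$ we have $B_{4r}(\mathbf x_0)\subset\om'$, so $U_r=\tfrac{R}{\al r}L(B_{4r}(\mathbf x_0))$ is open, being the image of an open set under the homeomorphism $\mathbf x\mapsto\tfrac{R}{\al r}L(\mathbf x)$; for $\mathbf z\in U_r$ one has $\tfrac{\al r}{R}\mathbf z\in L(B_{4r}(\mathbf x_0))\subset L(\om')$, hence $T_r(\mathbf z)=L^{-1}(\tfrac{\al r}{R}\mathbf z)$ is well-defined, $T_r$ is a $C^1$-diffeomorphism of $U_r$ onto $B_{4r}(\mathbf x_0)$, and $T_r(\mathbf 0)=L^{-1}(\mathbf 0)=\mathbf x_0$. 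To get $B_4(\mathbf 0)\subset U_r$ it suffices that $\tfrac{\al r}{R}B_4(\mathbf 0)=B_{r\sigma}(\mathbf 0)\subset L(B_{4r}(\mathbf x_0))$: if $|\mathbf y|\le r\sigma$ and $R'\le\rho_0/(2\sigma)$, then $|\mathbf y|\le\rho_0$ and $|L^{-1}(\mathbf y)-\mathbf x_0|\le 2\sigma^{-1}|\mathbf y|\le 2r<4r$, i.e.\ $\mathbf y\in L(B_{4r}(\mathbf x_0))$. This proves (i).

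\textbf{(ii) and (iii).} Differentiating $T_r(\mathbf z)=L^{-1}(\tfrac{\al r}{R}\mathbf z)$ by the chain rule and the inverse function theorem,
\[
\frac{DT_r(\mathbf z)}{r}=\frac{\al}{R}\,DL^{-1}\!\Bigl(\tfrac{\al r}{R}\mathbf z\Bigr)=\frac{\al}{R}\bigl[DL(T_r(\mathbf z))\bigr]^{-1}\qquad(\mathbf z\in U_r),
\]
so evaluating at $\mathbf z=\mathbf 0$, where $T_r(\mathbf 0)=\mathbf x_0$, gives $\tfrac{DT_r(\mathbf 0)}{r}=\tfrac{\al}{R}M^{-1}=\mathbf A$ for every $r\le R'$, and taking determinants gives $\det DT_r(\mathbf z)=(\al r/R)^n\,(\det DL(T_r(\mathbf z)))^{-1}$. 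Since $T_r(\mathbf z)\in B_{4r}(\mathbf x_0)\subset\overline{B_{4R}(\mathbf x_0)}$ and $C:=\max_{\overline{B_{4R}(\mathbf x_0)}}\det DL\in(0,\oo)$, this yields $\det DT_r(\mathbf z)\ge(\al/R)^nC^{-1}r^n=:c^{-1}r^n$ with $c=(R/\al)^nC$ depending only on $\mathbf x_0$, which is (ii). For (iii), continuity of $DL$ at $\mathbf x_0$ and of matrix inversion on invertible matrices gives $\rho_1>0$ with $|\tfrac{\al}{R}[DL(\mathbf x)]^{-1}-\mathbf A|\le|\mathbf A|/2$ for $|\mathbf x-\mathbf x_0|\le\rho_1$; I would then fix $R':=\min\{R/2,\ \rho_0/(2\sigma),\ \rho_1/4\}$. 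Then for $r\le R'$ and $\mathbf z\in B_4(\mathbf 0)$ we have $|T_r(\mathbf z)-\mathbf x_0|<4r\le\rho_1$, hence $|\mathbf A-\tfrac{DT_r(\mathbf z)}{r}|\le|\mathbf A|/2$, which is the first bound in (iii); and since $\sup_{\mathbf z\in B_4(\mathbf 0)}|T_r(\mathbf z)-\mathbf x_0|\le 4r\to 0$ and $\mathbf x\mapsto\tfrac{\al}{R}[DL(\mathbf x)]^{-1}$ is continuous at $\mathbf x_0$ with value $\mathbf A$, also $\sup_{\mathbf z\in B_4(\mathbf 0)}|\mathbf A-\tfrac{DT_r(\mathbf z)}{r}|\to 0$ as $r\to 0$.

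\textbf{Main obstacle.} There is no serious obstacle: the single point that genuinely uses the structure of the problem is the quantitative openness $L(B_{4r}(\mathbf x_0))\supset B_{r\sigma}(\mathbf 0)$ with a radius \emph{proportional} to $r$ (this is exactly what lets $B_4(\mathbf 0)$ sit inside $U_r$ uniformly in small $r$), and that is supplied by the first-order expansion of $L^{-1}$ at $\mathbf 0$. The only other thing to be careful about is the order in which the constants are fixed: $R$ and $\al$ are determined by $L$ and $\mathbf x_0$ alone, whereas $R'$ must be taken small enough to absorb the moduli of continuity of $DL^{-1}$ (for (i)) and of $[DL(\cdot)]^{-1}$ (for (iii)); everything else — the chain-rule formula for $DT_r$, the determinant lower bound, and the two perturbation estimates — is routine.
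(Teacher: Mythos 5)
Your proposal is correct and follows essentially the same route as the paper: the quantitative openness $L(B_{4r}(\mathbf x_0))\supset B_{4\al r/R}(\mathbf 0)$ via the first-order expansion of $L^{-1}$ at $\mathbf 0$ for (i), and the chain rule $DT_r(\mathbf z)=\tfrac{\al r}{R}DL^{-1}(\tfrac{\al r}{R}\mathbf z)$ combined with continuity of $DL^{-1}$ for (ii) and (iii), with $\mathbf A=\tfrac{\al}{R}DL^{-1}(\mathbf 0)$. The only differences are cosmetic choices of constants ($\al=R\sigma/4$ with $\sigma$ the least singular value of $DL(\mathbf x_0)$, versus the paper's $\al=R/(|DL^{-1}(\mathbf 0)|+\sup|DL^{-1}(\mathbf y)-DL^{-1}(\mathbf 0)|)$, and bounding $1/\det DL$ from below instead of $\det DL^{-1}$).
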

 
 \begin{proof}
 	By the continuity of $DL(\bf x)$ on $\om'$, we may pick $R>0$ such that 
 	\begin{equation}
 		\frac{ \sup_{\bf x \in B_{4R}(\bf x_0) } | \det DL(\bf x_0) - \det DL(\bf x)| }{ \big( |DL^{-1}(\bf 0)| + \sup_{\bf y \in L(B_{4R}(\bf x_0))} | DL^{-1}(\bf y) - DL^{-1}(\bf 0) |  \big)^n } \le \frac{1}{2} \det DL^{-1}(\bf 0)
 	\end{equation}
 	and we then pick
 	\begin{equation}
 		\al := \frac{ R }{  |DL^{-1}(\bf 0)| + \sup_{\bf y \in L(B_{4R}(\bf x_0))} | DL^{-1}(\bf y) - DL^{-1}(\bf 0) |  }.
 	\end{equation}
 	
 	 To prove (i), we first observe that $B_4(\bf 0) \subset U_r$ is equivalent to $L^{-1}(B_{4\al r/R}(\bf 0)) \subset B_{4r}(\bf x_0)$. This follows from the calculation
 	 \begin{align*}
 	 	\left| L^{-1}\left(\frac{4\al r \bf z}{R} \right) - \bf x_0 \right| &\le \left|  L^{-1}\left(\frac{4\al r \bf z}{R} \right) - L^{-1}(\bf 0) - DL^{-1}(\bf 0)\left[ \frac{4\al r \bf z}{R} \right] \right| \\
 	 	&\quad + \left| DL^{-1}(\bf 0)\left[ \frac{4\al r \bf z}{R} \right] \right| \\
 	 	&< \frac{4\al r}{R} \sup_{\bf y \in L(B_{4r}(\bf x_0))} \left|DL^{-1}(\bf y) - DL^{-1}(\bf 0)  \right| + \frac{4\al r}{R}|DL^{-1}(\bf 0)| \\
 	 	&\le 4r
 	 \end{align*}
 	 for any vector $\bf z \in \R^n$  with $|\bf z|<1$. The fact that $U_r$ is open and $T_r(\bf 0)=\bf x_0$ for each $r\le R$ is obvious.
 	 
 	 To prove (ii), since $DT_r(\bf z) = \frac{\al r}{R}DL^{-1}(\frac{\al r\bf z}{R})$, we have 
 	 \begin{align*}
 	 	\det DT_r(\bf z) &= \left(\frac{\al r}{R} \right)^n \det DL^{-1}(\bf z) \\
 	 	&\ge \left(\frac{\al r}{R} \right)^n \left( \det DL^{-1}(\bf 0) - \left| \det DL^{-1}(\bf 0) - \det DL^{-1}(\bf z) \right| \right) \\
 	 	&= r^n \left( \frac{\det DL^{-1}(\bf 0) -  \left| \det DL^{-1}(\bf 0) - \det DL^{-1}(\bf z) \right| }{\big( |DL^{-1}(\bf 0)| + \sup_{\bf y \in L(B_{4R}(\bf x_0))} | DL^{-1}(\bf y) - DL^{-1}(\bf 0) |  \big)^n  } \right) \\
 	 	&\ge \left( \frac{\det DL^{-1}(\bf 0)}{2} \right)  r^n
 	 \end{align*}
 	 for all $\bf z \in B_{4}(\bf 0)$ and all $r \le R$. We may thus pick $c := 2 \det DL(\bf x_0)$.
 	 
 	 Lastly, to prove (iii), since $DT_r(\bf z) = \frac{\al r}{R}DL^{-1}(\frac{\al r\bf z}{R})$, it is not hard to see that $\frac{DT_r(\bf 0)}{r} = \frac{\al}{R} DL^{-1}(\bf 0)$ for all $r$, so we may let $\bf A := \frac{\al}{R} DL^{-1}(\bf 0)$. Therefore
 	 \begin{align*}
 	 	\sup_{\bf z\in B_{4}(\bf 0)} \left|\bf A - \frac{DT_r(\bf z)}{r} \right| &= \frac{\al}{R} \sup_{\bf z\in B_{4}(\bf 0)} \left| DL^{-1}(\bf 0) - DL^{-1}(\bf z) \right| \\
 	 	&\le \frac{\al}{R} \sup_{\bf x\in B_{4r}(\bf x_0)} \left| DL(\bf x_0)^{-1} - DL(\bf x)^{-1} \right|,
 	 \end{align*}
 	 which converges to $0$ as $r\to 0$ since $L\in C^1$. We then pick a smaller $R' \le R$ so that
 	 \[
 	 \sup_{\bf x\in B_{4R'}(\bf x_0)} \left| DL(\bf x_0)^{-1} - DL(\bf x)^{-1} \right| \le \frac{R|\bf A|}{2\al}.
 	 \] 
 	 This concludes the proof.
 \end{proof}

 \paragraph{}
 Properties (i), (ii) and (iii) in Lemma \ref{lemma_local_C^1_parametrization} of $\{ T_r \}_{r\in (0,R']}$ of $C^1$ allow us to get the following estimate, which is our counterpart of equation (8) in \cite{HenclOnninen2018}.

 \begin{lemma}\label{lemma_u_j_near_u_in_small_ball}
 	Let $(u_j)_j$ be a sequence in $\W{1,p}(\om;\R^n)$, $p \ge 1$, such that $u_j \wlim u$. Let $\dt > 0$ be given. For each $\bf x_0\in \om$, suppose that there exists a family $\{ T_r \}_{r\in (0,R']}$ of $C^1$ diffeomorphisms that satisfies properties (i), (ii) and (iii) in Lemma \ref{lemma_local_C^1_parametrization} (here $R'$ depends on $\bf x_0$). Then, for almost every $\bf x_0 \in \om$, there exists $R''<R'$ in which for every $r\in(0,R'')$, there is a corresponding $j_0 = j_0(\bf x_0,\dt,r) \in \Bbb N$ such that
 	\begin{equation}
 		\int_{B_4(\bf 0)} \left| \frac{ u_j(T_r(\bf z)) - u(\bf x_0) }{r} - Du(\bf x_0)[\bf A\bf z] \right| \,dz < \dt^3
 	\end{equation}
 	for all $j \ge j_0$.
 \end{lemma}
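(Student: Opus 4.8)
The plan is to reduce the estimate, via the change of variables $\bf x = T_r(\bf z)$, to two standard facts: the strong $\Leb{1}_{loc}$-convergence $u_j\to u$ implied by the weak $\W{1,p}$-convergence, and the first-order $\Leb{1}$-differentiability of the Sobolev map $u$ at almost every point. Concretely, for $\bf z\in B_4(\bf 0)$ I would write
\[
\frac{u_j(T_r(\bf z)) - u(\bf x_0)}{r} - Du(\bf x_0)[\bf A\bf z] = \mathrm{(I)}+\mathrm{(II)}+\mathrm{(III)},
\]
with $\mathrm{(I)} := \tfrac{1}{r}\big(u_j(T_r(\bf z)) - u(T_r(\bf z))\big)$, $\mathrm{(II)} := \tfrac{1}{r}\big(u(T_r(\bf z)) - u(\bf x_0) - Du(\bf x_0)(T_r(\bf z) - \bf x_0)\big)$, and $\mathrm{(III)} := Du(\bf x_0)\big[\tfrac{1}{r}(T_r(\bf z) - \bf x_0) - \bf A\bf z\big]$, and I would bound the $\Leb{1}(B_4(\bf 0))$-norm of each term separately.

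For $\mathrm{(I)}$, weak convergence in $\W{1,p}$ makes $(u_j)_j$ bounded in $\W{1,p}$, hence precompact in $\Leb{1}_{loc}(\om)$ by Rellich--Kondrachov, so that $u_j\to u$ strongly in $\Leb{1}(B_{4r}(\bf x_0))$ for every fixed $r$ with $\overline{B_{4r}(\bf x_0)}\subset\om$. Substituting back through $T_r$ and using property (ii) to bound $\det DT_r\ge c^{-1}r^n$ on $B_4(\bf 0)$ gives
\[
\int_{B_4(\bf 0)}|\mathrm{(I)}|\,dz \le \frac{c}{r^{n+1}}\,\nm{u_j - u}_{\Leb{1}(B_{4r}(\bf x_0))},
\]
which, for $r$ held fixed, tends to $0$ as $j\to\oo$; this is what fixes $j_0 = j_0(\bf x_0,\dt,r)$.

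For $\mathrm{(III)}$, since $T_r(\bf 0) = \bf x_0$ and $DT_r(\bf 0) = r\bf A$ by property (iii), the fundamental theorem of calculus gives $\tfrac{1}{r}(T_r(\bf z) - \bf x_0) - \bf A\bf z = \int_0^1\big(\tfrac{1}{r}DT_r(t\bf z) - \bf A\big)[\bf z]\,dt$, whence $\int_{B_4(\bf 0)}|\mathrm{(III)}|\,dz\le |B_4|\,|Du(\bf x_0)|\cdot 4\sup_{\bf w\in B_4(\bf 0)}\big|\tfrac{1}{r}DT_r(\bf w) - \bf A\big|$, which converges to $0$ as $r\to 0$ by the last assertion in property (iii), at every $\bf x_0$ at which $Du(\bf x_0)$ is a finite matrix. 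For $\mathrm{(II)}$, substituting $\bf x = T_r(\bf z)$ once more, bounding $\det DT_r$ from below and using $T_r(B_4(\bf 0))\subset B_{4r}(\bf x_0)$ yields
\[
\int_{B_4(\bf 0)}|\mathrm{(II)}|\,dz \le c\,4^{n+1}\,\frac{1}{(4r)^{n+1}}\int_{B_{4r}(\bf x_0)}\big|u(\bf x) - u(\bf x_0) - Du(\bf x_0)(\bf x - \bf x_0)\big|\,d\bf x,
\]
and the right-hand side tends to $0$ as $r\to 0$ at every Lebesgue point $\bf x_0$ of $Du$, by the $\Leb{1}$-differentiability of $\W{1,1}$ maps. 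I would recall its short proof: from the ACL representation $u(\bf x) - u(\bf x_0) = \int_0^1 Du(\bf x_0 + t(\bf x - \bf x_0))(\bf x - \bf x_0)\,dt$, valid on almost every segment issued from $\bf x_0$, together with Fubini and the change of variables $\bf y = \bf x_0 + t(\bf x - \bf x_0)$, one bounds $s^{-(n+1)}\int_{B_s(\bf x_0)}|u - u(\bf x_0) - Du(\bf x_0)(\cdot - \bf x_0)|$ by a constant times $\int_0^1\dashint_{B_{ts}(\bf x_0)}|Du - Du(\bf x_0)|\,dt$, which tends to $0$ as $s\to 0$ by dominated convergence (alternatively, apply Poincar\'e's inequality to $u - u(\bf x_0) - Du(\bf x_0)(\cdot - \bf x_0)$).

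Finally, on the full-measure set of those $\bf x_0$ that are Lebesgue points of $Du$ and for which $\overline{B_{4R'}(\bf x_0)}\subset\om$, I would first choose $R''\in(0,R')$ so that $\int_{B_4(\bf 0)}(|\mathrm{(II)}|+|\mathrm{(III)}|)\,dz < \dt^3/2$ for all $r\in(0,R'')$, and then, for each such $r$, choose $j_0 = j_0(\bf x_0,\dt,r)$ so that $\int_{B_4(\bf 0)}|\mathrm{(I)}|\,dz < \dt^3/2$ for all $j\ge j_0$; the triangle inequality then gives the claim. I expect the only genuine subtlety to be the bookkeeping of scales: the substitution produces the singular weight $r^{-(n+1)}$, and one must verify that the strong $\Leb{1}$-convergence of $u_j$ on the \emph{fixed} ball $B_{4r}(\bf x_0)$ (used before sending $j\to\oo$) and the $\Leb{1}$-differentiability of $u$ each supply exactly this rate of decay. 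The role of the non-affine parametrizations $T_r$ --- the main departure from the affine rescalings used implicitly in \cite{HenclOnninen2018} --- is confined to the term $\mathrm{(III)}$, and property (iii) of Lemma~\ref{lemma_local_C^1_parametrization} is designed precisely so that this term vanishes in the limit.
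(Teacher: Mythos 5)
Your proposal is correct and follows essentially the same route as the paper's proof: the same three-term decomposition (the $u_j-u$ term handled by the strong $\Leb{1}$ convergence coming from the compact Sobolev embedding, the linearization error of $u$ at an a.e.\ point of $\Leb{1}$-differentiability, and the linearization error of $T_r$ controlled by property (iii)), with the change of variables and the Jacobian lower bound from property (ii) supplying the weight $c/r^{n}$ exactly as in the paper. The only cosmetic difference is that you justify the $\Leb{1}$-differentiability of $u$ at Lebesgue points of $Du$ explicitly (via the ACL/Poincar\'e argument), whereas the paper invokes it directly at points of the approximate differentiability set $\om_d$.
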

 
 \begin{proof}
 	For each $\bf x_0$, we let $c=c(\bf x_0)>0$ be the constant and $\bf A = \bf A(\bf x_0)$ be the linear map given by Lemma \ref{lemma_local_C^1_parametrization}. Recall that $u$ is approximately differentiable a.e.\ in $\om$, so we may let $\bf x_0 \in \om_d$ (see Definition \ref{defn_approx_diff_set}) so that
 	\[
 	\lim_{r\to 0} \dashint_{B_r(\bf x_0)} \left| \frac{u(\bf x) - u(\bf x_0) - Du(\bf x_0)[\bf x-\bf x_0]}{r} \right| \,d\bf x = 0.
 	\]
 	We can thus find $R_1 < R'$ such that 
 	\[
 	\frac{c}{r^n}\int_{B_{4r}(\bf x_0)} \left| \frac{u(\bf x) - u(\bf x_0) - Du(\bf x_0)[\bf x-\bf x_0]}{r} \right| \,d\bf x < \frac{\dt^3  }{3} \numberthis\label{eqn_u_j_near_u_in_small_ball_1}
 	\]
 	for every $r \in (0,R_1)$.
 	
 	From Lemma \ref{lemma_local_C^1_parametrization} (i), (ii) and the change of variable formula,
 	\begin{align*}
 		\int_{B_4(\bf 0)} f(T_r(\bf z)) \,d\bf z &\le \frac{c}{r^n} \int_{U_r} f(T_r(\bf z)) \det DT_r(\bf z) \,d\bf z \\
 		&= \frac{c}{r^n} \int_{B_{4r}(\bf x_0)} f(\bf x) \,d\bf x
 	\end{align*}
 	for any integrable function $f$, hence 
 	\begin{align*}
 		\int_{B_4(\bf 0)} &\left| \frac{ u_j(T_r(\bf z)) - u(\bf x_0) }{r} - Du(\bf x_0)[\bf A\bf z] \right| \,d\bf z \\
 		&\quad\quad\quad\quad \le \int_{B_{4}(\bf 0)} \left| \frac{ u_j(T_r(\bf z)) - u(\bf x_0) - Du(\bf x_0)[T_r(\bf z) - \bf x_0] }{r} \right| \,d\bf z + \frac{\dt^3}{3}  \\
 		&\quad\quad\quad\quad \le \frac{c}{r^n}\int_{B_{4r}(x_0)} \left| \frac{ u_j(\bf x) - u(\bf x_0) - Du(\bf x_0)[\bf x-\bf x_0]}{r} \right| \,d\bf x + \frac{\dt^3}{3} \numberthis\label{eqn_u_j_near_u_in_small_ball_2}
 	\end{align*}
 	whenever $r \in (0,R_2)$ for some $R_2 < R_1$. The first inequality follows from
 	\begin{align*}
 		\left| \mathbf A[\bf z] - \frac{T_r(\bf z)-\bf x_0}{r} \right| = \left| \frac{ T_r(\bf z) - T_r(\bf 0) - DT_r(\bf 0)[\bf z] }{r} \right| \le \frac{4}{r} \sup_{\bf z\in B_{4}(\bf 0)} | DT_r(\bf 0) - DT_r(\bf z) |,   
 	\end{align*}
 	which converges to $0$ as $r\to 0$ according to Lemma \ref{lemma_local_C^1_parametrization} (iii), so we may pick $R_2<R_1$ such that the last term is less than $\frac{\dt^3}{3|Du(\bf x_0)| |B_4(\bf 0)| }$ whenever $r < R_2$.
 	
 	Now, the weak convergence of $u_j$ to $u$ in $\W{1,p}(\om;\R^n)$ implies that $u_j \to u$ strongly in $\Leb{1}(\om;\R^n)$, hence for any $r \in (0,R_2)$, there exists $j_0=j_0(\bf x_0,\dt,r) \in \Bbb N$ such that
 	\[
 	\int_{B_{4r}(\bf x_0)} |u(\bf x) - u_j(\bf x) | \,dx <  r^{n+1} \frac{\dt^3}{3c} \numberthis\label{eqn_u_j_near_u_in_small_ball_3}
 	\]
 	for all $j > j_0$. Combining \eqref{eqn_u_j_near_u_in_small_ball_1}, \eqref{eqn_u_j_near_u_in_small_ball_2} and \eqref{eqn_u_j_near_u_in_small_ball_3} gives the result we want (with $R'' = R_2$).
 \end{proof}

 In the later sections, we shall need the map $L$ in Lemma \ref{lemma_local_C^1_parametrization} to have a specific form, so that the family $\{ T_r \}_{r\in(0,R']}$ generated from it parametrises the balls $B_{4r}(\bf x_0)$ in a way that $u\circ T_r$ has nice properties when $u$ is a generalised axisymmetric map. For that purpose, we define the following family of $C^1$ map on a neighbourhood of each $\bf x_0 \in \R^3$.
 
 \begin{definition}\label{defn_parametrization_L}
 	For any fixed $\bf x_0 \in \R^3\bsl\{ (0, 0, t) : t \in \R \}$, we let $r_0, \th_0, z_0$ be the numbers determined by $\bf x_0 = (r_0, \th_0, z_0)_{cyl}$. Consider the function $L_{\bf x_0}\colon B_{r_0}(\bf x_0) \to \R^3$ defined by
 	\begin{equation}\label{eqn_parametrization_L}
 		L_{\bf x_0}\colon (r, \th, z)_{cyl} \mapsto (r-r_0, -z + z_0, \th - \th_0).
 	\end{equation}
 	
 \end{definition}
 
 Note that the right hand side of \eqref{eqn_parametrization_L} is the standard coordinates in $\R^3$, while the left hand side is the cylindrical coordinates. We can easily see that $L_{\bf x_0}(\bf x_0) = \bf 0$, $\det DL_{\bf x_0}>0$ and that $L_{\bf x_0}$ maps subsets of $O_{\th}$ into the plane $H_{\th - \th_0}$.

\section{Links and the linking number}
 
 \paragraph{}
 Consider $\Phi\colon \lbar{B^{2}}\bsl\{\bf 0\} \times \lbar{B^{2}} \to \R^3$ defined by
 \[
 \Phi(\xi,\et) = \Phi(\xi_1,\xi_2,\et_1,\et_2) := \left( (|\xi|\et_1+2) \hat\xi_1, (|\xi|\et_1+2) \hat\xi_2, |\xi|\et_2 \right).
 \]
 Here we write $\hat\xi = \frac{\xi}{|\xi|}$ for $\xi \ne \bf 0$, i.e.\ $\hat\xi$ is the unit vector pointing in the direction of $\xi$.\footnote{Note that our definition of $\Phi$ is slightly different from the one in \cite{HenclMaly2010, HenclOnninen2018}.} We can see that $\Phi(S^1 \times B^{2})$ is the open anuloid 
 \[
  \Bbb A := \left\{  (x_1,x_2,x_3)\in \R^3 : \left( \sqrt{x_1^2 + x_2^2} - 2 \right)^2 + x_3^2 < 1 \right\}
 \]
 in $\R^3$, whereas $\Phi(S^1 \times S^1)$ is its surface $\de \Bbb A$.

 We define a \textit{link} to be a pair $(\vf, \ps)$ of (continuous) parametrized curves $\vf\colon S^1 \to \R^3$ and $\ps\colon S^1 \to \R^3$. The \textit{linking number} of the link  $(\vf, \ps)$ is defined using the topological degree
 \[
 \L(\vf,\ps) := \deg(L, \Bbb A, \bf 0) = \deg(L, \de \Bbb A, \bf 0),
 \]
 where $L = L_{\vf,\ps}\colon \lbar{\Bbb A} \to \R^n$ is defined as
 \[
 L(\bf x) = \vf(\xi(\bf x)) - \lbar\ps(-\et(\bf x)), 
 \] 
 provided that $L(\bf x) \ne \bf 0$ for all $\bf x\in \de \Bbb A$, or equivalently
 \[
 L(\Ph(\xi,\et)) = \vf(\xi) - \lbar\ps(-\et)\quad \text{for }\ \xi\in S^1, \et\in B^{2},
 \]
 where $\lbar\ps$ is an arbitrary continuous extension of $\ps$ from the circle $S^1$ to the whole closed ball $\lbar{B^{2}}$. How $\ps$ is extended does not matter since $\L_{\vf,\ps}$ only depends on the values of $L$ on the boundary $\de \Bbb A.$

 We shall define $\mu\colon S^1 \to \R^3$ and $\nu\colon S^1 \to \R^3$ by
 \begin{align*}
 	\mu(\xi) &:= \Phi(\xi,\bf 0), \\
 	\nu(\et) &:= \Phi(\bf e_1, \et).
 \end{align*}
 The pair $(\mu,\nu)$ is called the \textit{canonical pair}. The images of these maps are
 \begin{align}
 	\begin{split}
 		\mu(S^1) &= C_{hor} := \{ (x_1,x_2,0) \in \R^3 : x_1^2+x_2^2 = 4\}, \\
 		\nu(S^1) &= C_{ver} := \{ (x_1,0,x_3) \in \R^3 : (x_1-2)^2 + x_3^2 = 1 \}.
 	\end{split} \label{defn_circles_C_h_and_C_v}
 \end{align}

 \paragraph{}
 For each $\bf x \in \lbar{\Bbb A}$, there exists a unique pair $(\xi,\et) \in S^1 \times  \lbar{B^{2}}$ such that $\Phi(\xi,\et) = \bf x$. We shall denote these as $\xi(\bf x)$ and $\et(\bf x)$. In particular, for $\bf x = (x_1,x_2,x_3)$, direct computation shows that 
 \[
 \xi(\bf x) = \left( \frac{x_1}{\sqrt{x_1^2 + x_2^2}}, \frac{x_2}{\sqrt{x_1^2 + x_2^2}} \right), \et(\bf x) = \left( \sqrt{x_1^2 + x_2^2} - 2, x_3 \right), 
 \]
 thus 
 \[
 D\et(\bf x) = \begin{bmatrix}
 	\frac{x_1}{\sqrt{x_1^2 + x_2^2}} &\frac{x_2}{\sqrt{x_1^2 + x_2^2}} &0 \\
 	0 &0 &1
 \end{bmatrix}
 \]
 and hence $J_\et(\bf x) = \sqrt{x_1^2 + x_2^2}$ (here $J_\et = \sqrt{\det((D\et)(D\et)^t)}$).

 On the other hand, for each $\bf x \in \lbar{\Bbb A}\bsl C_{hor}$, there exists a unique pair $(\td\xi,\td\et) \in \lbar{B^{2}}\bsl\{\bf 0\} \times S^1$ such that $\Phi(\td\xi,\td\et) = \bf x$. We shall denote these as $\td\xi(\bf x)$ and $\td\et(\bf x)$. By computing in the cylindrical coordinates $(r,\th,z)$, we can see that
 \[
 \td\xi(\bf x) = \left( \sqrt{(r-2)^2 + z^2} \cos(\th), \sqrt{(r-2)^2 + z^2} \sin(\th) \right)
 \]
 thus 
 \[
 D_{(r,\th,z)}\td\xi = \begin{bmatrix}
 	\frac{(r-2)\cos(\th)}{\sqrt{(r-2)^2 + z^2}} &-\sqrt{(r-2)^2 + z^2}\sin(\th) &\frac{z\cos(\th)}{\sqrt{(r-2)^2 + z^2}} \\
 	\frac{(r-2)\sin(\th)}{\sqrt{(r-2)^2 + z^2}} &\sqrt{(r-2)^2 + z^2}\cos(\th) &\frac{z\sin(\th)}{\sqrt{(r-2)^2 + z^2}} 
 \end{bmatrix}.
 \]
 Recall that 
 \[
 \bf M := \frac{\de(r,\th,z)}{\de (x_1,x_2,x_3)} = \begin{bmatrix}
 	\cos(\th) &\sin(\th) &0 \\
 	-\frac{\sin(\th)}{r} &\frac{\cos(\th)}{r} &0 \\
 	0 &0 &1
 \end{bmatrix},\quad
 \bf M \bf M^t = \begin{bmatrix}
 	1 &0 &0 \\
 	0 &\frac{1}{r^2} &0 \\
 	0 &0 &1
 \end{bmatrix}
 \]
 and 
 \[
 D\td\xi = (D_{(r,\th,z)}\td\xi)\bf M ,
 \]
 hence, after some tedious calculation, we can see that 
 \[
 J_{\td{\xi}}(\bf x) = \frac{(r-2)^2+z^2}{r^2} = \frac{(\sqrt{x_1^2+x_2^2}-2)^2+x_3^2}{x_1^2+x_2^2}.
 \] 
 This shows that $J_{\td\xi}$ is bounded away from $0$ away from the circle $C_{hor}$.
 
 \paragraph{}
 Following \cite{HenclOnninen2018}, we now define the perturbed version of the curves in the canonical pair $(\mu,\nu)$.

 \begin{definition}\label{defn_mu_a_and_nu_b}
 	For $\bf a\in B^{2}_{1/10}(\bf 0)$ and $\bf b \in B^{2}(\bf 0) \cap B^{2}_{1/10}(\bf e_1)$, we define\footnote{Since we define $\Phi$ differently from \cite{HenclOnninen2018}, our $\nu_{\bf b}$ will also be slightly different. In particular, $\nu_{\bf b}(S^1)$ and  $\nu_{\bf b'}(S^1)$ are disjoint whenever $\bf b \ne \bf b'$. Our definition of $\mu_{\bf a}$, however, coincides with \cite{HenclOnninen2018}. }
 	\begin{align*}
 		\mu_{\bf a}(\xi) &:= \Ph(\xi,\bf a) \quad \text{for }\ \xi\in S^1, \\
 		\nu_{\bf b}(\et) &:= \Ph(\bf b,\et) \quad \text{for }\ \et\in S^1.
 	\end{align*}
 \end{definition}

 \paragraph{}
 It is not difficult to verify (e.g.\ via a direct computation or via homotopy) that $\L(\mu_{\bf a}, \nu_{\bf b}) = 1$ for every $\bf a$ and $\bf b$. We end this section by defining property \hyperref[defn_property_L]{(L)}.

 \begin{definition}[Property (L)]\label{defn_property_L}
 	 A function $u\colon \om \to \R^3$ is said to satisfy \emph{Property (L)} if, for almost every $\bf x_0 \in \om$, there exists a corresponding family $\{ T_r \}_{r\in(0,R']}$ generated by some $C^1$-diffeomorphism $L$ according to Lemma \ref{lemma_local_C^1_parametrization} such that for each $r \le R'$,  $v\colon B_4(\bf 0) \to \R^3$ defined by
 	 \begin{equation}\label{eqn_property_L_1}
 	 	 	v(\bf z) := u(T_r(\bf z)).
 	 \end{equation}
 	 satisfies the following conditions:
 	 \begin{itemize}
 	 	\item[(i)] There exist measurable sets $E_{hor} \subset B^{2}_{1/10}(\bf 0)$ and $E_{ver} \subset B^{2}_1(\bf 0) \cap B^{2}_{1/10}(\bf e_1)$ such that $E_{hor}$ is of full measure in $B^{2}_{1/10}(\bf 0)$, $E_{ver}$ is of full measure in $B^{2}_1(\bf 0) \cap B^{2}_{1/10}(\bf e_1)$, $v$ is continuous on $ \mu_{\bf a}(S^1)$ for every $\bf a \in E_{hor}$, and $v$ is continuous on $\nu_{\bf b}(S^1)$ for every $\bf b \in E_{ver}$.

 	 	\item[(ii)] There \textbf{does not exist} a pair of measurable sets $\td A \subset E_{hor}$ and $\td B \subset E_{ver}$ such that $\L^2(\td A) >0$, $\L^2(\td B) >0$, and for every $(\bf a,\bf b) \in \td A \times \td B$, the linking number of  $(v \circ \mu_{\bf a}, v \circ \nu_{\bf b})$ is well-defined and
 	 	\[
 	 		\L(v \circ \mu_{\bf a}, v \circ \nu_{\bf b}) < 0.
 	 	\]

 	 \end{itemize}
 \end{definition}

 \paragraph{Remark:} Since the linking number is a topological invariant, dilations and translations do not affect the linking number. This means that we may replace \eqref{eqn_property_L_1} with
 \begin{equation}\label{eqn_property_L_2}
 	v(\bf z) = \frac{u(T_r(\bf z)) - u(\bf x_0)}{r}
 \end{equation}
 and get an equivalent definition of property \hyperref[defn_property_L]{(L)}. 
 

 According to \cite[Proposition 4.1]{HenclMaly2010}, a Sobolev homeomorphism with positive Jacobian a.e.\ has property \hyperref[defn_property_L]{(L)} since we can take $T_r$ to be the (orientation-preserving) affine map $\bf z \mapsto \bf x_0 + r\bf z$ and show that $\L(v \circ \mu_{\bf a}, v \circ \nu_{\bf b}) = 1$ for any pair $(\bf a,\bf b)$ (see also \cite[Section 2.5]{GoldsteinHajlasz2019}).

\section{Calculations of the linking number}
 
 \paragraph{}
 The argument we use in the following lemma can be found in the proof of \cite[Theorem 1]{HenclOnninen2018} (where $\bf M$ is the identity matrix). For the convenience of the readers, we shall give a more detailed proof here. 
 
 \begin{lemma}\label{lemma_degree_near_isometry}
 	Let $\bf a \in B^{2}_{1/10}(\bf 0)$, $\bf b \in B^{2}(\bf 0) \cap B^{2}_{1/10}(\bf e_1)$ and suppose that $v\colon B_4(\bf 0) \to \R^n$ is continuous on $\mu_{\bf a}(S^1)$ and $\nu_{\bf b}(S^1)$. Let  $\bf{M} \in \Bbb M^{3\times 3}$ be an invertible linear transformation with $m_* := \inf_{|\bf z|=1} |\bf M[\bf z]|$, and set
 	\[
 	f(\bf z) := \left| v(\bf z)  - \bf M[\bf z] \right|.
 	\]
 	If
 	\begin{align*}
 		f(\mu_{\bf a}(\xi)) &= |v(\mu_{\bf a}(\xi)) -  \bf{M}[\mu_{\bf a}(\xi)]| \le m_*/10 \quad\text{ for all }\ \xi \in S^1 \quad{and} \\
 		f(\nu_{\bf b}(\et)) &= |v(\nu_{\bf b}(\et)) -  \bf{M}[\nu_{\bf b}(\et)]| \ \le m_*/10 \quad\text{ for all }\ \et \in S^1,
 	\end{align*}
 	then the linking number of the pair $(v \circ \mu_{\bf a}, v \circ \nu_{\bf b})$ is well-defined, and
 	\[
 	\L(v \circ \mu_{\bf a}, v \circ \nu_{\bf b}) = \sgn(\det \bf M).
 	\]
 \end{lemma}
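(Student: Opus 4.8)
The plan is to follow the strategy implicit in \cite[proof of Theorem~1]{HenclOnninen2018}: deform $v$ linearly onto $\bf{M}$ along the two test curves, invoke homotopy invariance of the Brouwer degree, and then compute the linking number of $(\bf{M}\circ\mu_{\bf a},\bf{M}\circ\nu_{\bf b})$ directly. The geometric input that makes this work is that, for the admissible parameters $\bf a\in B^{2}_{1/10}(\bf 0)$ and $\bf b\in B^{2}(\bf 0)\cap B^{2}_{1/10}(\bf e_1)$, the curves $\mu_{\bf a}(S^1)$ and $\nu_{\bf b}(S^1)$ are uniformly separated: writing a point of $\mu_{\bf a}(S^1)$ in cylindrical coordinates as $(2+a_1,\,\cdot\,,a_2)_{cyl}$ and a point of $\nu_{\bf b}(S^1)$ as $(2+|\bf b|\et_1,\,\cdot\,,|\bf b|\et_2)_{cyl}$ with $|\et|=1$, and discarding the non-negative contribution of the angular variable, one bounds the squared distance below by $(a_1-|\bf b|\et_1)^2+(a_2-|\bf b|\et_2)^2=|\bf a-|\bf b|\et|^2\ge(|\bf b|-|\bf a|)^2$, so that $\dist(\mu_{\bf a}(S^1),\nu_{\bf b}(S^1))\ge|\bf b|-|\bf a|>4/5$, using $|\bf a|<1/10$ and $|\bf b|>9/10$.

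First I would check that $\L(v\circ\mu_{\bf a},v\circ\nu_{\bf b})$ is well-defined, i.e.\ that $L_{v\circ\mu_{\bf a},\,v\circ\nu_{\bf b}}$ has no zero on $\de\Bbb A$, equivalently that the two image curves are disjoint. For $\xi,\et\in S^1$, using $|\bf{M}[\bf w]|\ge m_*|\bf w|$ together with the hypothesis $f\le m_*/10$ on the two curves,
\[
\bigl|v(\mu_{\bf a}(\xi))-v(\nu_{\bf b}(\et))\bigr|\ \ge\ m_*\,\bigl|\mu_{\bf a}(\xi)-\nu_{\bf b}(\et)\bigr|-\tfrac{2m_*}{10}\ >\ \tfrac{4m_*}{5}-\tfrac{m_*}{5}\ >\ 0 .
\]

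Then I would run the homotopy. For $t\in[0,1]$ put $V_t:=(1-t)v+t\,\bf{M}$ on $\mu_{\bf a}(S^1)\cup\nu_{\bf b}(S^1)$, so that $V_0=v$ and $V_1=\bf{M}$ there and $|V_t-\bf{M}|=(1-t)f\le m_*/10$ on each curve; hence the estimate above holds verbatim with $V_t$ in place of $v$, with a bound independent of $t$. Fixing a continuous extension $\lbar{v\circ\nu_{\bf b}}$ of $v\circ\nu_{\bf b}$ to $\lbar{B^{2}}$ and using $\Ph(\bf b,\cdot)$ (well-defined because $\bf b\ne\bf 0$) as an extension of $\nu_{\bf b}$, the family
\[
L^{(t)}(\Ph(\xi,\et)):=V_t(\mu_{\bf a}(\xi))-\bigl[(1-t)\,\lbar{v\circ\nu_{\bf b}}(-\et)+t\,\bf{M}[\Ph(\bf b,-\et)]\bigr],\qquad\xi\in S^1,\ \et\in\lbar{B^{2}},
\]
is continuous on $[0,1]\times\lbar{\Bbb A}$ and nowhere zero on $[0,1]\times\de\Bbb A$, so $\deg(L^{(t)},\Bbb A,\bf 0)$ is constant in $t$ and $\L(v\circ\mu_{\bf a},v\circ\nu_{\bf b})=\L(\bf{M}\circ\mu_{\bf a},\bf{M}\circ\nu_{\bf b})$. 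Finally, taking $\bf{M}[\Ph(\bf b,\cdot)]$ as the extension of $\bf{M}\circ\nu_{\bf b}$ gives $L_{\bf{M}\circ\mu_{\bf a},\,\bf{M}\circ\nu_{\bf b}}=\bf{M}\circ L_{\mu_{\bf a},\nu_{\bf b}}$ on $\lbar{\Bbb A}$, with $L_{\mu_{\bf a},\nu_{\bf b}}\ne\bf 0$ on $\de\Bbb A$ by the separation estimate; multiplicativity of the Brouwer degree under post-composition with a linear isomorphism (equivalently, connecting $\bf{M}$ to $I$ inside $GL^+(3)$, resp.\ to a fixed reflection inside $GL^-(3)$) yields $\L(\bf{M}\circ\mu_{\bf a},\bf{M}\circ\nu_{\bf b})=\sgn(\det\bf{M})\,\L(\mu_{\bf a},\nu_{\bf b})=\sgn(\det\bf{M})$, using $\L(\mu_{\bf a},\nu_{\bf b})=1$ from Section~4.

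The hard part will be the bookkeeping in the homotopy step: one has to ensure that $L^{(t)}$ is \emph{jointly} continuous on $\lbar{\Bbb A}\times[0,1]$ and \emph{uniformly} (in $t$) bounded away from $\bf 0$ on $\de\Bbb A$. Both hinge on keeping the extension of the second curve explicit rather than abstract, and on the separation estimate $\dist(\mu_{\bf a}(S^1),\nu_{\bf b}(S^1))>4/5$ being $t$-independent — that estimate is elementary but is precisely where the particular radii $1/10$ and $1$ of the parameter balls are used.
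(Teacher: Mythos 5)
Your proposal is correct and follows essentially the same route as the paper's proof: the linear homotopy $v_\tau=(1-\tau)v+\tau\mathbf M$ along the two curves, a quantitative lower bound on the separation of $\mu_{\bf a}(S^1)$ and $\nu_{\bf b}(S^1)$ combined with $|\mathbf M[\mathbf w]|\ge m_*|\mathbf w|$ and the $m_*/10$ hypothesis to keep the homotopy away from $\mathbf 0$ on $\de\Bbb A$, and then homotopy invariance of the degree together with $\L(\mathbf M\circ\mu_{\bf a},\mathbf M\circ\nu_{\bf b})=\sgn(\det\mathbf M)$. The only cosmetic differences are that you derive the separation bound ($>4/5$) directly in cylindrical coordinates rather than by comparison with the canonical pair $(\mu,\nu)$ as the paper does ($\ge 4/10$), and that you justify the final degree computation via multiplicativity under a linear isomorphism where the paper cites Hencl--Mal\'y; both variants are valid.
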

 
 \begin{proof}
 	For each $\bf z \in \de \Bbb A \subset \R^3$, we may consider the homotopy $H\colon \de\Bbb A \times [0,1] \to \R^n$ defined by 
 	\[
 	H(\bf z,\tau) := (v_\tau \circ \mu_{\bf a})(\xi(\bf z)) - (v_\tau \circ \nu_{\bf b})(-\et(\bf z)),
 	\]
 	or equivalently
 	\[
 	H(\Ph(\xi,\et),\tau) := (v_\tau \circ \mu_{\bf a})(\xi) - (v_\tau \circ \nu_{\bf b})(-\et),
 	\]
 	where
 	\begin{align*}
 		v_\tau(\bf z) &:= (1-\tau)v(\bf z)  + \tau \bf M[\bf z] \\
 		&\ =  \bf M[\bf z] + (1-\tau)\left( v(\bf z)  -\bf M[\bf z] \right).
 	\end{align*}
 	It is easy to see that
 	\begin{align*}
 		H(\bf z,0) &=  (v \circ \mu_{\bf a})(\xi(\bf z)) - (v \circ \nu_{\bf b})(-\et(\bf z)), \quad\text{and} \\
 		H(\bf z,1) &= \bf M[\mu_{\bf a}(\xi)] - \bf M[\nu_{\bf b}(-\et)].
 	\end{align*}
 	
 	From \cite{HenclMaly2010}, we know that 
 	\[
 	\deg(H(\cdot, 1), \de \Bbb A, \bf 0) = \sgn(\det \bf M),
 	\]
 	hence the conclusion follows if we can show that $H( \de \Bbb A,\tau) \ne 0$ for all $\tau \in [0,1]$.
 	
 	Observe that for each $\xi\in S^1$ we have
 	\[
 	|\mu_{\bf a}(\xi) - \mu(\xi)| = |\Ph(\xi,{\bf a})- \Ph(\xi,\bf 0)| = | (a_1\xi_1, a_1\xi_2, a_2) | = |{\bf a}| < 1/10,
 	\]
 	and for each $\et \in S^1$ we have
 	\[
 	|\nu_{\bf b}(\et) - \nu(\et)| = |\Ph({\bf b},\et) - \Ph(\bf e_1,\et)| = | ((b_1-1)\et_1, b_2\et_1, (|b|-1)\et_2 ) + 2(\hat b_1 -1, \hat b_2,0) |, 
 	\]
 	thus
 	\[
 	|\nu_{\bf b}(\et) - \nu(\et)| \le |{\bf b} - \bf e_1| + 2|\bf{\hat b} - \bf e_1| < 5/10.
 	\]
 	This implies that $\dist(\mu_{\bf a}(S^1),\mu(S^1)) \le 1/10$ and $\dist(\nu_{\bf b}(S^1),\nu(S^1)) \le 5/10$ for any ${\bf a} \in E_{hor}$ and ${\bf b} \in E_{ver}$, hence 
 	\begin{align*}
 		\dist(\bf M[\mu_{\bf a}(S^1)], \bf M[\nu_{\bf b}(S^1)]) &\ge m_* \dist(\mu_{\bf a}(S^1), \nu_{\bf b}(S^1)) \\
 		&\ge m_*\left( \dist(\mu(S^1),\nu(S^1)) - 6/10  \right) \\
 		&= m_*(1- 6/10).
 	\end{align*}
 	Since 
 	\begin{align*}
 		H(\Ph(\xi,\et),\tau) &= (v_\tau \circ \mu_{\bf a})(\xi) - (v_\tau \circ \nu_{\bf b})(-\et) \\
 		&= \bf M[\mu_{\bf a}(\xi)] - \bf M[\nu_{\bf b}(-\et)] + (1- \tau) \left( v(\mu_{\bf a}(\xi)) - \bf{M}[\mu_{\bf a}(\xi)] \right) \\
 		&\quad  - (1-\tau) \left( v(\nu_{\bf b}(-\et)) - \bf{M}[\nu_{\bf b}(-\et)] \right),
 	\end{align*}
 	we have
 	\begin{align*}
 		|H(\Ph(\xi,\et),\tau)| &\ge  |\bf M[\mu_{\bf a}(\xi)] - \bf M[\nu_{\bf b}(-\et)]| - (1- \tau)f(\mu_{\bf a}(\xi)) - (1- \tau)f(\nu_{\bf b}(\et)) \\
 		&\ge \dist(\bf M[\mu_{\bf a}(S^1)], \bf M[\nu_{\bf b}(S^1)])  - 2m_*/10 \\
 		&\ge m_*(1-8/10) > 0.
 	\end{align*}
 	This shows that $H(\de \Bbb A,\tau) \ne \bf 0$ for all $\tau\in [0,1]$ as claimed. 	
 \end{proof}

 \paragraph{}
 Let $X$ and $Y$ be smooth, oriented manifolds without boundary embedded in some Euclidean spaces, and let $Z$ be a submanifold of $Y$. We shall denote the tangent space of $X$ at $x$ by $T_x(X)$, and for a submanifold $S$ of $X$, the normal space of $S$ with respect to $X$ at the point $x\in S$ is denoted by $N_x(S;X)$.

 \begin{definition}
 	Suppose $f\colon X\to Y$ is a smooth map. We say that $f$ is \emph{transversal} to $Z$ if
 	\[
 	Df_x(T_x(X)) + T_{f(x)}(Z) = T_{f(x)}(Y)
 	\]
 	for all $x \in f^{-1}(Z)$. If $X$ is a submanifold of $Y$, we say that $X$ intersects $Z$ transversally if the inclusion map $i\colon X \hookrightarrow Y$ is transversal to $Z$ (the roles of $X$ and $Z$ can be interchanged in that case).
 \end{definition}

 When $f$ is transversal to $Z$, $S := f^{-1}(Z)$ is a submanifold of $X$, where the codimension of $S$ in $X$ is equal to the codimension of $Z$ in $Y$. It can be seen that $Df_x$ maps $N_x(S;X)$ isomorphically to $Df_x(N_x(S;X))$ such that
 \[
 Df_x(N_x(S;X)) \oplus T_{f(x)}(Z) = T_{f(x)}(Y),
 \]
 thus the orientation on $Y$ induces an orientation on $Df_x(N_x(S;X))$, and hence on $N_x(S;X)$ via the isomorphism $Df_x$. The orientations on $X$ and $N_x(S;X)$ then induce the \emph{preimage orientation} on $S$ according to the relation
 \[
 N_x(S;X) \oplus T_x(S) = T_x(X).
 \]

 \begin{definition}
 	When $X$ is compact, $Z$ is closed, $\dim X + \dim Z = \dim Y$ and $f$ is transversal to $Z$, $S = f^{-1}(Z)$ is a set of finite isolated points. The preimage orientation on $S$ assigns a number $+1$ or $-1$ to each point in $S$. The sum of these numbers are called the \emph{intersection number}  $I(f,Z)$.
 \end{definition}
 See \cite{GuilleminPollack1974} or \cite{Hirsch1976} for a more complete discussion of the intersection number and the derivations of the facts given above.

 \paragraph{}
 We shall now show how to calculate the linking number when one curve in the link is a planar curve using the $2$-dimensional topological degree. To review some standard facts on this topic, see \cite{Deimling1985} or \cite{Zeidler1986}. Recall that for an open set $U$, a point $y \in \R^n$ and a continuous map $f\in C(\lbar U;\R^n)$, the degree $\deg(f,U,y)$ depends only on the value of $f$ on $\de U$, so we will often write $\deg(f,\de U, y)$ in place of $\deg(f, U, y)$.

 \begin{definition}\label{defn_disks_and_circles_in_H_z}
 	For each $\bf a = (a_1,a_2) \in B^{2}_{1/10}(\bf 0)$, the circle $\mu_{\bf a}(S^1)$ lies entirely in the plane
 	\[
 	H_{a_2} := \{(x,y,a_2) \in\R^3 : x,y \in\R \}.
 	\]
 	We shall let $\td D_{\bf a}$ be the (open) disk in $H_{a_2}$ such that $\de \td D_{\bf a} = \mu_{\bf a}(S^1)$. For each $\bf b \in B^{2}(\bf 0) \cap B^{2}_{1/10}(\bf e_1)$, the circle $\nu_{\bf b}(S^1)$ intersects the plane $H_{a_2}$ twice, we let $\td{\bf p}_{\bf a \bf b}$ be the point that lies inside $\td D_{\bf a}$ and let $\td{\bf q}_{\bf a \bf b}$ be the point that lies outside the disk. 
 	
 	Lastly, let $\pi_{xy}\colon\R^3 \to \R^2$ be the projection onto the first two coordinates. We shall write $S_{\bf a} := \pi_{xy}(\mu_{\bf a}(S^1))$, $D_{\bf a} := \pi_{xy}(\td D_{\bf a})$, $\bf p_{\bf a \bf b} := \pi_{xy}(\td{\bf p}_{\bf a \bf b})$ and $\bf q_{\bf a \bf b} := \pi_{xy}(\td{\bf q}_{\bf a \bf b})$.
 \end{definition}

 \begin{lemma}\label{lemma_linking_number_using_2D_degree}
 	Let $v\colon B_4(\bf 0) \to \R^3$ and suppose that there exist $z' \in \R$ and a map $w\colon B_4^2(\bf 0) \to \R^2$ such that $v(z_1, z_2, z') = (w_1(z_1,z_2), w_2(z_1,z_2), 0)$ for all $z_1,z_2$. Let $\bf a\in B^{2}_{1/10}(\bf 0)$ and $\bf b \in B^{2}(\bf 0) \cap B^{2}_{1/10}(\bf e_1)$ be fixed, where $\bf a=(a_1,a_2)$ satisfies $a_2=z'$, and let $S_{\bf a}$, $\bf p_{\bf a\bf b}$ and $\bf q_{\bf a\bf b}$ be defined as in Definition \ref{defn_disks_and_circles_in_H_z}. Assume that $v = (v_1, v_2, v_3)$ is continuous on $\mu_{\bf a}(S^1)$\footnote{It then easily follows that $w$ is continuous on $S_{\bf a}$, hence the right hand side of \eqref{eqn_linking_number_using_2D_degree} is well-defined.} and $\nu_{\bf b}(S^1)$, and that $v_3$ is absolutely continuous with $\de_3 v_3 > 0$ a.e., then
 	\begin{equation}\label{eqn_linking_number_using_2D_degree}
 		\L(v\circ \mu_{\bf a}, v\circ \nu_{\bf b} ) = \deg(w, S_{\bf a}, w(\bf p_{\bf a \bf b})) - \deg(w, S_{\bf a}, w(\bf q_{\bf a \bf b})),
 	\end{equation}
 	provided that $v\circ \mu_{\bf a}(S^1) \cap v\circ \nu_{\bf b}(S^1) = \emptyset$.
 \end{lemma}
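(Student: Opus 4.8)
The plan is to evaluate $\L(v\circ\mu_{\bf a},v\circ\nu_{\bf b})$ as the intersection number of one of the two curves with a Seifert surface of the other, chosen so that the surface lies in a plane. Set $\alpha:=v\circ\mu_{\bf a}$, $\beta:=v\circ\nu_{\bf b}$ and $P_0:=\{(x_1,x_2,x_3)\in\R^3:x_3=0\}$. Since $\bf a=(a_1,a_2)$ with $a_2=z'$, the circle $\mu_{\bf a}(S^1)$ lies in $H_{z'}$, where $v$ has the prescribed form $v(z_1,z_2,z')=(w_1(z_1,z_2),w_2(z_1,z_2),0)$; hence $\alpha$ takes values in $P_0$ and, after applying $\pi_{xy}$, is precisely the curve $w$ run along $S_{\bf a}=\pi_{xy}(\mu_{\bf a}(S^1))$. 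Consequently $\alpha$ bounds the planar singular $2$-chain $\Sigma\subset P_0$ obtained by capping $S_{\bf a}$ with any continuous extension of $w|_{S_{\bf a}}$ to $\overline{D_{\bf a}}$ (viewed as a map into $P_0$ via $\bf y\mapsto(\bf y,0)$); the multiplicity with which $\Sigma$ covers a point $(\bf y,0)\in P_0\setminus\alpha(S^1)$ equals the planar winding number $\deg(w,S_{\bf a},\bf y)$.

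Before appealing to intersection theory, I would reduce to the smooth setting. By hypothesis $\alpha(S^1)\cap\beta(S^1)=\emptyset$, so $d:=\dist(\alpha(S^1),\beta(S^1))>0$, and a $C^0$-approximation of $w|_{S_{\bf a}}$ and of $\beta$ within $d/3$ — keeping $\alpha$ valued in $P_0$ and $\alpha,\beta$ disjoint — changes neither $\L(\alpha,\beta)$ (the defining degree $\deg(L,\Bbb A,\bf 0)$ is stable, since $|L|\ge d$ on $\de\Bbb A$) nor the winding numbers $\deg(w,S_{\bf a},\bf y)$ for $\bf y$ at distance $>d/3$ from $w(S_{\bf a})$. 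The two points that appear below, $w(\bf p_{\bf a\bf b})=\pi_{xy}(v(\td{\bf p}_{\bf a\bf b}))$ and $w(\bf q_{\bf a\bf b})=\pi_{xy}(v(\td{\bf q}_{\bf a\bf b}))$, are of this kind: $v(\td{\bf p}_{\bf a\bf b}),v(\td{\bf q}_{\bf a\bf b})$ lie in $\beta(S^1)\cap P_0$, and since $\pi_{xy}$ is injective on $P_0$ they cannot lie on $\pi_{xy}(\alpha(S^1))=w(S_{\bf a})$. Thus it suffices to prove \eqref{eqn_linking_number_using_2D_degree} for smooth $\alpha,\beta,\Sigma$.

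The linking number is symmetric and, for disjoint curves, equals the intersection number of $\beta$ with any smooth Seifert surface of $\alpha$ transverse to $\beta$ (see \cite{GuilleminPollack1974,Hirsch1976}); taking $\Sigma$ as above gives $\L(\alpha,\beta)=I(\beta,\Sigma)$. Since $\Sigma\subset P_0$, the intersections lie over the points where $\beta=v\circ\nu_{\bf b}$ meets $P_0$. Here the hypotheses on $v_3$ enter: vanishing on $H_{z'}$, absolutely continuous and with $\de_3v_3>0$ a.e., $v_3$ is strictly increasing in $z_3$, hence positive on $\{z_3>z'\}$ and negative on $\{z_3<z'\}$, so $v^{-1}(P_0)=H_{z'}$; while the circle $\nu_{\bf b}(S^1)=\{\Ph(\bf b,\et):\et\in S^1\}$, whose height is $|\bf b|\et_2$, crosses $H_{z'}$ transversally at exactly two points — $\td{\bf p}_{\bf a\bf b}$ (inside $\td D_{\bf a}$, where $\et_1<0$ and the height is decreasing) and $\td{\bf q}_{\bf a\bf b}$ (outside, where $\et_1>0$ and the height is increasing). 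Therefore $\beta$ meets $P_0$ transversally at exactly $v(\td{\bf p}_{\bf a\bf b})$ and $v(\td{\bf q}_{\bf a\bf b})$, crossing it downward at the former and upward at the latter, and reading off the intersection number gives
\[
\L(\alpha,\beta)=I(\beta,\Sigma)=\varepsilon_{\bf p}\,\deg(w,S_{\bf a},w(\bf p_{\bf a\bf b}))+\varepsilon_{\bf q}\,\deg(w,S_{\bf a},w(\bf q_{\bf a\bf b}))
\]
with crossing signs $\varepsilon_{\bf p},\varepsilon_{\bf q}\in\{\pm1\}$; since the two crossings are oppositely oriented, $\varepsilon_{\bf p}=-\varepsilon_{\bf q}$.

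The main obstacle is the orientation bookkeeping needed to decide which of the two sign patterns, $(\varepsilon_{\bf p},\varepsilon_{\bf q})=(+1,-1)$ or $(-1,+1)$, actually occurs: this means threading the orientation conventions carefully through the parametrization $\Ph$ of $\lbar{\Bbb A}$, the induced orientation on $\de\Bbb A$, the degree-theoretic definition of $\L$, and the preimage orientation defining $I(\beta,\Sigma)$. A clean shortcut is to test the canonical case: when $v$ restricts to the identity on $H_{z'}$ one has $w=\mathrm{id}$, $v\circ\mu_{\bf a}=\mu_{\bf a}$ and $v\circ\nu_{\bf b}=\nu_{\bf b}$, so the left-hand side is the known value $\L(\mu_{\bf a},\nu_{\bf b})=1$, whereas $\deg(\mathrm{id},S_{\bf a},\,\cdot\,)$ equals $1$ at the interior point $\bf p_{\bf a\bf b}$ and $0$ at the exterior point $\bf q_{\bf a\bf b}$; this forces $\varepsilon_{\bf p}=+1$ and hence $\varepsilon_{\bf q}=-1$, which is \eqref{eqn_linking_number_using_2D_degree}. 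A secondary point requiring care is the exact transversality and finiteness of $\beta\cap P_0$, which rests on reading the hypothesis on $v_3$ as giving $v^{-1}(P_0)=H_{z'}$ precisely — automatic when $v_3$ depends on $z_3$ alone, as in the generalised axisymmetric application, and otherwise restored by a harmless $C^0$-perturbation of $\beta$ (harmless because $\dist(\alpha,\beta)>0$).
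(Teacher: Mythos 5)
Your argument is correct and follows essentially the same route as the paper: approximate the curves by smooth ones, express the linking number as the intersection number of $v\circ\nu_{\bf b}$ with a planar cap of $v\circ\mu_{\bf a}$ lying in $\{x_3=0\}$, use the hypothesis $\de_3 v_3>0$ to localize the intersections to the images of $\td{\bf p}_{\bf a\bf b}$ and $\td{\bf q}_{\bf a\bf b}$, and identify the two local contributions with planar winding numbers. The only substantive difference is how the crossing signs are fixed --- you normalize by testing the identity on $H_{z'}$, whereas the paper tracks the orientations of $N_\ve$ and $P_\ve$ directly --- and your trick is a clean, equally valid way to settle that bookkeeping.
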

 
 \begin{proof}
 	In this proof, we shall identify $S^1$ with $[0,2\pi]$ and identify a function $f \colon S^1 \to \R^3$ with a periodic function $\td f \colon \R \to \R^3$, where $\td f(\th) = f(\cos(\th),\sin(\th))$ for $\th \in [0,2\pi]$ and extends $\td f$ by $\td f(\th + 2n\pi) =  \td f(\th)$ for $n \in \Bbb Z$. Hence, for a fixed bump function $\phi \in C^\oo(\R)$, we may define $f_\ve$ to be the function on $S^1$ that is identified with $\td f * \phi_\ve$, where $\phi_\ve := \frac{1}{\ve}\phi(\frac{\cdot}{\ve})$. Clearly, each $f_\ve$ is smooth and  $f_\ve \to f$ uniformly as $\ve \searrow 0$.
 	
 	Let $f := v\circ \mu_{\bf a}$, $g:= v\circ \nu_{\bf b}$ and let $(f_\ve)_\ve$ and $(g_\ve)_\ve$ be the corresponding families of mollified functions converging uniformly to $f$ and $g$, respectively. For each $\ve >0$, $N_\ve = g_\ve(S^1)$ is then a smooth $1$-dimensional manifold. Suppose that $v\circ \mu_{\bf a}(S^1) \cap v\circ \nu_{\bf b}(S^1) = \emptyset$, since the linking number is continuous with respect to uniform convergence, we have
 	\begin{align*}
 		\L(v\circ \mu_{\bf a}, v\circ \nu_{\bf b} ) &= \lim_{\ve \to 0} \L(f_\ve, g_\ve ) \\
 		&= \lim_{\ve \to 0} I(F_\ve, N_\ve), 
 	\end{align*}
 	where $F_\ve\colon \lbar{B_1^2(\bf 0)} \to \R^3 $ is an arbitrary extension of $f_\ve$ that is smooth on $B_1^2(\bf 0)$ with its image lying entirely in $H_{z'} = \{(x,y,z') \in\R^3 : x,y \in\R \}$, and $F_\ve$ is transversal to $N_\ve$ (see \cite[Chapter 2.3]{GuilleminPollack1974} for the existence of such extension). The fact that the linking numbers can be computed with the intersection numbers using the formula $\L(f_\ve, g_\ve ) = I(F_\ve, N_\ve)$ can be seen in \cite[Chapter III.17]{BottTu1982} (see also \cite[Chapter 5.D]{Rolfsen1976}).

 	We let $P_\ve = N_\ve \cap H_{z'}$. $P_\ve$ consists of exactly two points, i.e.\ $P_\ve = \{\bf p_\ve, \bf q_\ve \}$, where $\bf p_\ve$ and $\bf q_\ve$ satisfy $\bf p_\ve \to v(\td{\bf p}_{\bf a \bf b})$ and $\bf q_\ve \to v(\td{\bf q}_{\bf a \bf b})$ as $\ve \to 0$, so $P$ is an oriented $0$-dimensional manifold whose orientation is induced from the orientation of $N_\ve$.\footnote{Here we endow $B_1^2(\bf 0)$, $\R^3$ and $H_{z'}$ with the standard orientations, and the orientation on $N_\ve$ is chosen so that the sign of $I(F_\ve, N_\ve)$ matches the sign of $\L(f_\ve, g_\ve )$. In particular, we have $x_3 >0$ for $\bf x \in T_{\bf p_\ve}(N_\ve)$ and $x_3 <0$ for $\bf x \in T_{\bf q_\ve}(N_\ve)$. This orientation on $N_\ve$ is consistent as $\ve$ varies.}  It can be verified that $\bf p_\ve$ is the point with positive orientation number and $\bf q_\ve$ is the one with negative orientation number.	Since $F_\ve$ is transversal to $N_\ve$, it is also transversal to $P_\ve$, and hence $I(F_\ve, P_\ve) = I(F_\ve, N_\ve)$, where on the left hand side we view $F_\ve$ as a map into a $2$-dimensional manifold $H_{z'}$. 
 	
 	Since the intersection number $I(F_\ve, P_\ve)$ coincides with the topological degree (or the \emph{winding number}, see \cite[Chapter 2.5, 3.6]{GuilleminPollack1974} and \cite[Chapter 5]{Hirsch1976}), we have
 	\begin{align*}
 		 	I(F_\ve, N_\ve) &= I(F_\ve, P_\ve) \\
 		 	&= \deg(\pi_{xy} \circ F_\ve, B_1^2(\bf 0), \pi_{xy}(\bf p_\ve)) - \deg(\pi_{xy} \circ F_\ve, B_1^2(\bf 0), \pi_{xy}(\bf q_\ve)) \\
 		 	&= \deg(\pi_{xy} \circ f_\ve, S^1, \pi_{xy}(\bf p_\ve)) - \deg(\pi_{xy} \circ f_\ve, S^1, \pi_{xy}(\bf q_\ve))
 	\end{align*}
 	Since $\bf p_\ve \to v(\td{\bf p}_{\bf a \bf b})$, $\bf q_\ve \to v(\td{\bf q}_{\bf a \bf b})$ and  $f_\ve \to v \circ \mu_{\bf a}$ uniformly as $\ve \to 0$, using also $\pi_{xy}\circ v = w \circ \pi_{xy}$, we may deduce
 	\begin{align*}
 		\lim_{\ve \to 0} I(F_\ve, P_\ve) &= \deg(\pi_{xy} \circ v \circ \mu_{\bf a}, S^1, \pi_{xy}(v(\td{\bf p}_{\bf a \bf b}))) - \deg(\pi_{xy} \circ v \circ \mu_{\bf a}, S^1, \pi_{xy}(v(\td{\bf q}_{\bf a \bf b}))) \\
 		&= \deg(w \circ \pi_{xy} \circ \mu_{\bf a}, S^1, w(\bf p_{\bf a \bf b}) ) - \deg(w \circ \pi_{xy} \circ \mu_{\bf a}, S^1, w(\bf q_{\bf a \bf b}) ) \\
 		&= \deg(w, S_{\bf a}, w(\bf p_{\bf a \bf b})) - \deg(w, S_{\bf a}, w(\bf q_{\bf a \bf b})).
 	\end{align*}
 	This completes the proof. 	
  \end{proof}

  \paragraph{Remark:} The assumption $\de_3 v_3 > 0$ ensures that that $N_\ve = g_\ve(S^1)$ intersects that plane $H_{z'}$ transversally in $\R^3$ and that the sign when calculating the linking number via the intersection number is correct.

 The following proposition is a direct consequence of Theorem 4.1 in \cite{BarchiesiHenaoMora-Corral2017}, adapted to fit our purpose.
 
 \begin{proposition}\label{prop_degree_for_map_satisfying_div_identities}
 	Let $\Lambda\subset \R^2$ be an open domain and $w\in \W{1,p}(\Lambda;\R^2)$ for some $p>1$. Suppose that $\det Dw > 0$ a.e.\ and $w$ satisfies that divergence identities \eqref{div_identity}, then, for almost every $r>0$ such that $B_r^2(\bf 0) \subset \Lambda$, $\deg(w, \de B_r^2(\bf 0), \cdot)$ is defined (i.e.\ $w$ is continuous on $\de B_r^2(\bf 0)$) and we have
 	\begin{equation}\label{eqn_degree_for_map_satisfying_div_identities}
 		 \deg(w, \de B_r^2(\bf 0), \bf y) = \# \{ \bf x \in B_r^2(\bf 0) \cap \Lambda_0 : u(\bf x) = \bf y \}
 	\end{equation}
 	for almost every $\bf y\in \R^2$. 
 \end{proposition}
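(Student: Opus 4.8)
The plan is to deduce this proposition directly from \cite[Theorem 4.1]{BarchiesiHenaoMora-Corral2017}, which provides, under the stated hypotheses ($w\in\W{1,p}(\Lambda;\R^2)$ with $p>1$, $\det Dw>0$ a.e., and the divergence identities \eqref{div_identity}), a precise degree formula relating the topological degree of $w$ on spheres to a counting function built from $w$ restricted to its set of good approximate-differentiability points. Since that theorem is typically stated on balls $B_r^2(\bf y_0)$ for a.e.\ radius $r$ (with the center and radius chosen so the sphere avoids an exceptional null set and $w$ has a continuous representative on the sphere), the only work here is a change of variables / specialization: translate the statement so that it is phrased with center $\bf 0$, and record that the ``for almost every $r$'' clause already guarantees that $\deg(w,\de B_r^2(\bf 0),\cdot)$ is well-defined because $w$ admits a continuous representative on $\de B_r^2(\bf 0)$ for a.e.\ $r$ (this is a standard Fubini / trace argument on Sobolev spheres, and is part of the cited theorem's setup).

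Concretely, I would proceed as follows. First, recall from Definition \ref{defn_approx_diff_set} that $\Lambda_0\subset\Lambda_d\subset\Lambda$ is the full-measure set on which a (good) representative of $w$ is defined and one-to-one, and on which the change-of-variables formula holds; this is exactly the set appearing on the right-hand side of \eqref{eqn_degree_for_map_satisfying_div_identities}. Second, fix $r>0$ with $\lbar{B_r^2(\bf 0)}\subset\Lambda$; by the Sobolev trace / Fubini argument, for a.e.\ such $r$ the restriction $w|_{\de B_r^2(\bf 0)}$ is (equal a.e.\ on the circle to) a continuous map, so $\deg(w,\de B_r^2(\bf 0),\bf y)$ is defined for every $\bf y\notin w(\de B_r^2(\bf 0))$, and in particular for a.e.\ $\bf y\in\R^2$ since the image of a circle under a continuous map is $\L^2$-null. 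Third, apply \cite[Theorem 4.1]{BarchiesiHenaoMora-Corral2017} with domain $B_r^2(\bf 0)$ in place of $\Lambda$ (the divergence identities and $\det Dw>0$ are hereditary to subdomains, and $\Lambda_0\cap B_r^2(\bf 0)$ serves as the good set for the subdomain): this yields, for a.e.\ $\bf y\in\R^2$,
\[
\deg(w,\de B_r^2(\bf 0),\bf y)=\#\{\bf x\in B_r^2(\bf 0)\cap\Lambda_0 : w(\bf x)=\bf y\},
\]
which is \eqref{eqn_degree_for_map_satisfying_div_identities} (the ``$u$'' there being a typo for $w$). Finally, since both sides of this identity, as functions of $r$, change only on a null set of radii, the combined null set of bad radii (those where the trace fails to be continuous, plus those excluded by the cited theorem) is still null, giving the claim for a.e.\ $r$.

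The only mild subtlety — and hence the ``main obstacle'' — is bookkeeping the exceptional sets: one must ensure that the null set of ``bad'' radii coming from the trace argument and the null set of ``bad'' radii intrinsic to \cite[Theorem 4.1]{BarchiesiHenaoMora-Corral2017}, together with the null set of ``bad'' values $\bf y$, can all be absorbed into single null sets so that the quantifiers line up as ``for a.e.\ $r$, for a.e.\ $\bf y$''. This is routine once one notes that the degree formula in the cited theorem is already stated in precisely this ``a.e.\ $r$, a.e.\ $\bf y$'' form on arbitrary balls, so restricting to balls centered at the origin costs nothing. No genuinely new estimate is required; the proposition is an adaptation/restatement, exactly as the surrounding text announces.
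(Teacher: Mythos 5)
Your proposal is correct and follows essentially the same route as the paper: both reduce the statement to \cite[Theorem 4.1]{BarchiesiHenaoMora-Corral2017} after observing that for a.e.\ $r$ the ball $B_r^2(\mathbf 0)$ is a ``good open set'' (the paper cites \cite[Lemma 2]{HenaoMora-Corral2011} for this), so that $w$ is continuous on $\de B_r^2(\mathbf 0)$ and the degree is defined. The one step you elide is that the cited theorem counts preimages in $\Lambda_d$ rather than $\Lambda_0$; the paper closes this by noting that $(\Lambda_d\bsl\Lambda_0)\cap B_r^2(\mathbf 0)$ is null and hence, by the Lusin-type property on the set of approximate differentiability, so is its image under $w$, whence the two counts agree for a.e.\ $\mathbf y$.
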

 
 Note that here $\# A$ denotes the cardinality of $A$, and $\Lambda_0$ is defined as in Definition \ref{defn_approx_diff_set}.
 
 \begin{proof}
 	It is known that for almost every $r>0$ such that $B_r^2(\bf 0) \subset \Lambda$, $B_r^2(\bf 0)$ is a `good open set' in the sense of \cite[Definition 2.17]{BarchiesiHenaoMora-Corral2017}\footnote{In particular, if an open set $U \subset \Lambda$ is good, then it has a $C^2$ boundary and $\res{w}_{\de U} \in \W{1,2}(\de U;\R^2)$, hence $w$ is continuous on $\de U$. The proof that $B_r^2(\bf 0)$ is a good open set for almost every $r>0$ can be found in \cite[Lemma 2]{HenaoMora-Corral2011} (see also \cite{MullerSpector1995}).}, hence \cite[Theorem 4.1]{BarchiesiHenaoMora-Corral2017} implies that
 	\[
 	\deg(w, \de B_r^2(\bf 0), \bf y) = \# \{ \bf x \in B_r^2(\bf 0) \cap \Lambda_d : u(\bf x) =\bf  y \} \quad\text{for a.e.\ }\bf y
 	\]
 	for all such $r$. The right hand side of this equation differs from \eqref{eqn_degree_for_map_satisfying_div_identities} precisely when $\bf y \in w((\Lambda_d\bsl\Lambda_0) \cap B_r^2(\bf 0))$, but $(\Lambda_d\bsl\Lambda_0) \cap B_r^2(\bf 0)$ is a null set, so $w((\Lambda_d\bsl\Lambda_0)\cap B_r^2(\bf 0))$ is also null. This concludes the proof. 	
 \end{proof}

 \section{Main results}

 \paragraph{}
 The proof of our first main theorems follows closely the line of reasoning given in \cite[Theorem 1]{HenclOnninen2018} by Hencl and Onninen. Our main contribution is in recognising that while the linking number is a topological invariant, one does not need to assume that each Sobolev map in the sequence $(u_j)_j$ is an orientation-preserving homeomorphism to reach the same conclusion. A careful analysis of the proof of \cite[Theorem 1]{HenclOnninen2018} leads us to define property \hyperref[defn_property_L]{(L)}, which turns out to be a sufficient condition for the weak limit of $(u_j)_j$ to have non-negative Jacobian.

 \begin{theorem}\label{thm_main_for_property_L}
 	Let $\om \subset \R^3$ be a bounded open domain. $(u_j)_j$ be a sequence of $\W{1,2}(\om)$ maps that satisfy property \hyperref[defn_property_L]{(L)}. Suppose that $u_j \wlim u$ for some $u\in\W{1,2}(\om;\R^3)$, then $\det Du \ge 0$ a.e.
 \end{theorem}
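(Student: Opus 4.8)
The plan is to argue by contradiction, following the architecture of \cite[Theorem 1]{HenclOnninen2018}. Suppose $\det Du < 0$ on a set of positive measure. First I would localise: since $u$ is approximately differentiable a.e.\ and satisfies property \hyperref[defn_property_L]{(L)}, I can pick a point $\bf x_0$ which is simultaneously a point of approximate differentiability of $u$, a Lebesgue point of $\det Du$ with $\det Du(\bf x_0) < 0$, and a point for which the family $\{T_r\}_{r\in(0,R']}$ from Lemma \ref{lemma_local_C^1_parametrization} and Definition \ref{defn_property_L} exists. Setting $\bf M := Du(\bf x_0)\bf A$ (with $\bf A$ the linear map from Lemma \ref{lemma_local_C^1_parametrization}(iii)), we have $\det \bf M = \det Du(\bf x_0)\det \bf A < 0$, so $\sgn(\det\bf M) = -1$. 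The strategy is to show that for $r$ small and $j$ large, the rescaled maps $v_j(\bf z) := \frac{u_j(T_r(\bf z)) - u(\bf x_0)}{r}$ satisfy, on a large set of links $(\mu_{\bf a},\nu_{\bf b})$, the hypotheses of Lemma \ref{lemma_degree_near_isometry} with this $\bf M$, forcing $\L(v_j\circ\mu_{\bf a}, v_j\circ\nu_{\bf b}) = \sgn(\det\bf M) = -1$. That contradicts property \hyperref[defn_property_L]{(L)}(ii) for $u_j$ (note property (L) is invariant under the rescaling, by the Remark after Definition \ref{defn_property_L}), giving the result.

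The core analytic input is Lemma \ref{lemma_u_j_near_u_in_small_ball}, which gives, for a.e.\ $\bf x_0$, a radius $R''$ such that for every $r\in(0,R'')$ there is $j_0$ with
\[
\int_{B_4(\bf 0)} \left| \frac{u_j(T_r(\bf z)) - u(\bf x_0)}{r} - Du(\bf x_0)[\bf A\bf z] \right|\,d\bf z < \dt^3
\]
for $j\ge j_0$, i.e.\ $\|v_j - \bf M\|_{\Leb{1}(B_4(\bf 0))} < \dt^3$. The passage from this $\Leb{1}$-smallness to the pointwise bound $|v_j(\bf z) - \bf M[\bf z]| \le m_*/10$ required on the curves $\mu_{\bf a}(S^1)$ and $\nu_{\bf b}(S^1)$ is done by a Fubini/averaging argument over the parameters $\bf a$ and $\bf b$: the curves $\{\mu_{\bf a}(S^1)\}_{\bf a}$ foliate a solid region, and likewise $\{\nu_{\bf b}(S^1)\}_{\bf b}$, with Jacobian factors $J_\eta$ and $J_{\td\xi}$ that are bounded away from zero away from $C_{hor}$ (as computed in Section 4); integrating $|v_j - \bf M|$ in these coordinates and invoking Chebyshev shows that, for a proportion close to $1$ of $(\bf a,\bf b)$, the function $\xi\mapsto f(\mu_{\bf a}(\xi))$ and $\eta\mapsto f(\nu_{\bf b}(\eta))$ have small $\Leb{1}$ norm on $S^1$. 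One then needs to upgrade $\Leb{1}$-small on a circle to uniformly-small on that circle, which requires a further ingredient — this is where the continuity of $v_j = u_j\circ T_r$ on these curves (guaranteed by property (L)(i) for $u_j$, since $u_j$ itself has property (L)) must be combined with, e.g., choosing $\dt$ small relative to $m_*$ and using that $u_j\in\W{1,2}$ restricts to a Sobolev (hence continuous, in 1D) function on a.e.\ such curve, with a modulus controlled uniformly. Actually the cleaner route, matching \cite{HenclOnninen2018}, is: pick $\dt$ small, extract from the $\Leb{1}$ bound a single good pair $(\bf a,\bf b)$ (in fact a positive-measure set of such pairs, as property (L)(ii) requires a positive-measure set to contradict) for which the oscillation estimate holds on the two circles, using that the relevant traces vary continuously/measurably in $(\bf a,\bf b)$.

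I expect the main obstacle to be exactly this upgrade from $\Leb{1}$ control of $v_j - \bf M$ on $B_4(\bf 0)$ to the uniform bound $|v_j - \bf M| \le m_*/10$ on a positive-measure family of the one-dimensional curves $\mu_{\bf a}(S^1), \nu_{\bf b}(S^1)$ — the curves are lower-dimensional, so pointwise values are not controlled by the bulk $\Leb{1}$ norm alone, and one must exploit the Sobolev regularity of $u_j$ transversally to and along these curves, together with the fibration structure and the positivity of the Jacobian factors $J_\eta, J_{\td\xi}$, to make a quantitative Fubini argument work uniformly in $j$. Once a positive-measure set $\td A\times\td B$ of parameters with the uniform bound is secured, Lemma \ref{lemma_degree_near_isometry} applies verbatim to yield $\L(v_j\circ\mu_{\bf a}, v_j\circ\nu_{\bf b}) = \sgn(\det\bf M) = -1$ on $\td A\times\td B$, directly violating property \hyperref[defn_property_L]{(L)}(ii) for $u_j$. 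This contradiction completes the proof that $\det Du \ge 0$ a.e.
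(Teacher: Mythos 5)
Your setup is right (the choice of $\bf x_0$, the rescaled maps $v_j$, the matrix $\bf M = Du(\bf x_0)\circ\bf A$ with $\det\bf M<0$, and the intention to play Lemma \ref{lemma_degree_near_isometry} against property (L)(ii)), and you have correctly located the crux: the $\Leb{1}$ bound of Lemma \ref{lemma_u_j_near_u_in_small_ball} does not control $v_j-\bf M$ pointwise on the one-dimensional circles. But your proposed resolution --- upgrading $\Leb{1}$-smallness to the uniform bound $|v_j(\bf z)-\bf M[\bf z]|\le m_*/10$ on a positive-measure family of circles by a ``quantitative Fubini'' argument --- cannot work, and it is not what the paper does. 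The obstruction is that Morrey's embedding on a circle $\mu_{\bf a}(S^1)$ gives a H\"older modulus proportional to $\left(\int_{\mu_{\bf a}(S^1)}|Dv_j|^2\,d\Hd{1}\right)^{1/2}$, and after rescaling this quantity is in general only $O(1)$ (it is controlled by the maximal function of the weak-* limit of $\L^3\llcorner|Du_j|^2$, which is finite a.e.\ but not small). Hence $\Leb{1}$-smallness plus coarea/Chebyshev only yields that $f_j<m_*\dt$ \emph{off} a subset of each circle of $\Hd{1}$-measure less than $2\dt$, never everywhere on the circle; a uniform bound simply need not hold.

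The paper's actual argument is a dichotomy run in the opposite direction. Fix $r$ and $j$ large. \emph{Either} there exist positive-measure sets $\td A,\td B$ of parameters on which $f_j\le m_*/10$ holds uniformly on the corresponding circles --- in which case Lemma \ref{lemma_degree_near_isometry} gives $\L(v_j\circ\mu_{\bf a},v_j\circ\nu_{\bf b})=-1$ on $\td A\times\td B$ and property (L)(ii) for $u_j$ is violated; \emph{or} (what must therefore happen) for a.e.\ $\bf a$ (or a.e.\ $\bf b$) each circle contains a point $\bf z$ with $f_j(\bf z)>m_*/10$. In the latter case Lemma \ref{lemma_size_of_good_set} supplies, on at least half of the parameters $\bf a$, a second point $\bf z'$ on the same circle with $|\bf z-\bf z'|<\dt$ and $f_j(\bf z')<m_*\dt$; Morrey's embedding on the circle then forces $\int_{\mu_{\bf a}(S^1)}|Dv_j|^2\,d\Hd{1}\ge (C\dt)^{-1}$, and integrating in $\bf a$ by the coarea formula and changing variables gives $\dt^{-1}r^3\le C\int_{B_{4r}(\bf x_0)}|Du_j|^2\,d\bf x$. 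Since $\dt>0$ is arbitrary, the maximal function of $\mu=\textup{w*-}\lim\L^3\llcorner|Du_j|^2$ is infinite at $\bf x_0$ (Lemma \ref{lemma_blow_up}), which can only happen on a null set; hence $\{\det Du<0\}$ is null. This energy blow-up and maximal-function step is the essential piece missing from your proposal.
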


 \begin{proof}
 	We define $S := \{ \det Du < 0 \}$, our goal in this proof is to show that $S$ is a null set using Lemma \ref{lemma_blow_up} from the appendix. Let $\bf x_0 \in S$ and let $\{ T_r \}_{r\in(0,R']}$ be the family of diffeomorphisms generated by $L=L_{\bf x_0}$ (defined in Definition \ref{defn_parametrization_L}) according to Lemma \ref{lemma_local_C^1_parametrization}.
 	
 	Following the notations in Lemma \ref{lemma_degree_near_isometry}, we shall write $\bf M = Du(\bf x_0)\circ \bf A$ (so $\det \bf M < 0$) and $m_* := \inf_{|\bf z|=1} | Du(\bf x_0)[\bf A z]|$. Let $\dt>0$ be a sufficiently small number.\footnote{At least small enough that Lemma \ref{lemma_size_of_good_set} can be applied, and less than, say, $1/40$.} From Lemma \ref{lemma_u_j_near_u_in_small_ball}, for each $r\in (0,R')$, there is a constant $j_0 = j_0(\bf x_0,\dt,r) \in \Bbb N$ such that
 	\begin{equation}
 		\int_{B_{4}(\bf 0)} \left| \frac{ u_j(T_r(\bf z)) - u(\bf x_0) }{r} - Du(\bf x_0)[\bf A\bf z] \right| \,dz < \dt^3
 	\end{equation}
 	for all $j \ge j_0$. We now let the value of $r < R'$ be fixed for the rest of this proof. To apply Lemma \ref{lemma_degree_near_isometry}, we let 
 	\[
 	v_j(\bf z):= \frac{u_j(T_r(\bf z)) - u(\bf x_0)}{r}
 	\] 
 	and set
 	\[
 	f_j(\bf z) := \left| v_j(\bf z) - Du(\bf x_0)[\bf A\bf z] \right|.
 	\] 
 	
 	Let $E_{hor}$ and $E_{ver}$ be the sets given by Definition \ref{defn_property_L}. We can see that for almost every ${\bf a} \in E_{hor}$, there exists $\xi \in S^1$ such that
 	\begin{equation}\label{thm_main_eqn1}
 		f_j(\mu_{\bf a}(\xi)) =\left| \frac{ u_j(T_r(\mu_{\bf a}(\xi))) - u(\bf x_0) }{r} - Du(\bf x_0)[\bf A\mu_{\bf a}(\xi)] \right| > m_*/10,
 	\end{equation}
 	or, for almost every ${\bf b} \in E_{ver} \subset B^{2} \cap B_p^{2}(e_1)$, there exists $\eta \in S^1$ such that
 	\begin{equation}\label{thm_main_eqn2}
 		f_j(\nu_{\bf b}(\et)) =\left| \frac{ u_j(T_r(\nu_{\bf b}(\et))) - u(\bf x_0) }{r} - Du(\bf x_0)[\bf A\nu_{\bf b}(\et)] \right| > m_*/10. 
 	\end{equation}
 	Indeed, let $\td A \subset E_{hor}$ be the set of all $\bf a \in E_{hor}$ such that \eqref{thm_main_eqn1} fails for all $\xi$,  and let $\td B \subset E_{ver}$ be the set of all $\bf b \in E_{ver}$ such that \eqref{thm_main_eqn2} fails for all $\et$. Then, $\td A$ and $\td B$ are measurable\footnote{We can write $\td A = \{ \bf a \in E_{hor} : g(\bf a) \le m_*/10 \}$, where $g(\bf a) := \sup_{i\in \Bbb N} f_j(\mu_{\bf a}(\xi_i)$ for some sequence $(\xi_i)_i$ that is dense in $S^1$. Since $g$ is measurable, $\td A$ is thus measurable as well. A similar argument shows that $\td B$ is measurable.}	and for each pair $(\bf a,\bf b) \in \td A \times \td B$, Lemma \ref{lemma_degree_near_isometry} implies that
 	\[
 	\L\left( v_j \circ \mu_{\bf a}, v_j \circ \nu_{\bf b} \right) = \sgn(\det Du(\bf x_0) \circ \bf A) = -1,
 	\]
 	so we must have $|\td A| = 0$ or $|\td B| = 0$ since $u_j$ satisfies property \hyperref[defn_property_L]{(L)}.\footnote{See the remark that follows Definition \ref{defn_property_L}. } Without loss of generality, we assume that it is the former case and then redefine $E_{hor}$ so that $\td A = \emptyset$.

 	For each $j \ge j_0$, we define 
 	\begin{equation}	\label{thm_main_eqn3}
 		I_{A,j} :=\left\{ {\bf a} \in E_{hor} : \Hd{1}\left(\{ \res{f_j}_{\mu_{\bf a}(S^1)} \ge m_*\dt \} \right) < 2\dt \right\}
 	\end{equation}
 	Note that $\mu_{\bf a}(S^1) = \et^{-1}({\bf a})$, hence we may apply Lemma \ref{lemma_size_of_good_set} to conclude that $\L^{2}(I_{A,j}) > \frac{1}{2} \L^{2}(E_{hor}) = \frac{1}{2} \L^{2}(B_{1/10}^2(\bf 0))$, provided that our given $\dt$ is sufficiently small.\footnote{Here, we fix $\al = 1/2$ in Lemma \ref{lemma_size_of_good_set}. In the case that $|\td B| = 0$, the same analysis can be done to
 		\[
 		I_{B,j} :=\left\{ {\bf b} \in E_{ver} : \Hd{1}\left(\{ \res{f_j}_{\nu_{\bf b}(S^1)} \ge m_*\dt \} \right) < 2\dt \right\},
 		\]
 		using the fact that  $\nu_{\bf b}(S^1) = \td\xi^{-1}({\bf b})$ to conclude that $\L^{2}(I_{B,j}) > \frac{1}{2} \L^{2}(E_{ver})$ instead.} From the discussion in the previous paragraph, we have seen that for each ${\bf a} \in I_{A,j}$, there exists $\xi =\xi_{\bf a}$ such that \eqref{thm_main_eqn1} holds, i.e.
 	\[
 	f_j(\xi_{\bf a}) = \left| v_j(\mu_{\bf a}(\xi))  - Du(\bf x_0)[\bf A\mu_{\bf a}(\xi_{\bf a})] \right| > m_*/10.
 	\]
 	

 	From Morrey's embedding theorem, there exists a subset $I_{A,j}'$ of full measure in $I_{A,j}$ such that for each $\bf a \in I_{A,j}'$, we have 
 	\[
 	|v_j(\bf z) - v_j(\bf z') | \le C|\bf z - \bf z'|^{\frac{1}{2}} \left( \int_{\mu_{\bf a}(S^1)} |Dv_j|^{2} \,d\Hd{1} \right)^{\frac{1}{2}}
 	\]
 	for all $\bf z,\bf z' \in \mu_{\bf a}(S^1)$. We now pick $\bf z := \mu_{\bf a}(\xi_{\bf a})$ so that
 	\[
 	| v_j(\bf z) - Du(\bf x_0)[\bf A \bf z] | > m_*\p ,
 	\]
 	and we may then pick another $\bf z' \in \mu_{\bf a}(S^1)$ such that
 	\[
 	| v_j(\bf z') - Du(\bf x_0)[\bf A \bf z']  | < m_*\dt \quad \text{and} \quad |\bf z - \bf z'| < \dt.\footnote{It can be verified that $\Hd{1}(B_{\dt}(\bf z) \cap \mu_a(S^1)) \ge 2\dt$ uniformly for all  $\bf a \in E_{hor}$ whenever $\dt$ is sufficiently small, so not all points of $B_{\dt}(\bf z) \cap \mu_a(S^1)$ can be in the set $\{ \res{f_j}_{\mu_{\bf a}(S^1)} \ge m_*\dt \}$ since its $\Hd{1}$ measure is less than $2\dt$  (see \eqref{thm_main_eqn3}). Hence, some points in $B_{\dt}(\bf z) \cap \mu_a(S^1)$ must belong to $\{ \bf z' \in \mu_{\bf a}(S^1): f_j(z') < m_*\dt \}$.} 
 	\]
 	Thus
 	\begin{align*}
 		m_*/10 - m_*\dt - m_*\dt &\le f_j(\bf z) - f_j(\bf z') - m_*|\bf z - \bf z'| \\
 		&\le  | v_j(\bf z) -  Du(\bf x_0)[\bf A \bf z] | - | v_j(\bf z') -  Du(\bf x_0)[\bf A \bf z'] |   \\
 		&\quad\  - |Du(\bf x_0)[\bf A \bf z] - Du(\bf x_0)[\bf A \bf z']| \numberthis\label{thm_main_eqn4} \\
 		&\le |v_j(\bf z) - v_j(\bf z') | \numberthis\label{thm_main_eqn5}\\
 		&\le C|\bf z - \bf z'|^{\frac{1}{2}}  \left( \int_{\mu_{\bf a}(S^1)} |Dv_j|^{2} \,d\Hd{1} \right)^{\frac{1}{2}},
 	\end{align*}
 	where we used the definition of $m_*$ in \eqref{thm_main_eqn4}, and the standard triangle inequality $|a|-|b|-|c| \le |a\pm b \pm c|$ in \eqref{thm_main_eqn5}. 
 	
 	Therefore, since $|\bf z - \bf z'| < \dt$, for all sufficiently small $\dt>0$ we have
 	\[
 	1 \le C\dt  \int_{\mu_{\bf a}(S^1)} |Dv_j|^{2} \,d\Hd{1} .
 	\]
 	We may integrate the above with respect to ${\bf a} \in I_{A,j}' \subset B^{2}_{1/10}(\bf 0)$ using the coarea formula to get 
 	\[
 	\dt^{-1} \int_{B^{2}_{1/10}(\bf 0)} \chi_{I_{A,j}'}(\bf a) \,d\L^{2}(\bf a)  \le C \int_{B^{2}_{1/10}(\bf 0)} \int_{\mu_{\bf a}(S^1)} |Dv_j|^{2} \,d\Hd{1} \,d\L^{2}(\bf a),
 	\]
 	which implies
 	\[
 	\frac{1}{2} \L^2(B_{1/10}^2(\bf 0)) \dt^{-1} \le C \int_{B_{4}(0)} |Dv_j(\bf z)|^{2} \,d\bf z.
 	\]
 	Since $Dv_j(\bf z) = \frac{1}{r} Du_j(T_r(\bf z)) \circ DT_r(\bf z)$, we have 
 	\begin{align*}
 		\int_{B_{4r}(\bf x_0)} | Du_j(\bf x) |^{2} \,d\bf x &= \int_{U_r} |Du_j(T_r(\bf z)) |^{2} \det DT_r(\bf z) \,d\bf z \\
 		&= \int_{U_r} \left| Dv_j(\bf z) \circ \left[ \frac{DT_r(\bf z)}{r} \right]^{-1} \right|^{2} \det DT_r(\bf z) \,d\bf z \\
 		&\ge c^{-1} r^3  \int_{B_4(\bf 0)} \frac{\left| Dv_j(\bf z) \right|^{2}}{\left| DT_r(\bf z)/r \right|^{2}} \,d\bf z  \\
 		&\ge c^{-1} \left( \frac{2}{3|A|} \right)^{2} r^3  \int_{B_4(\bf 0)} \left| Dv_j(\bf z) \right|^{2} \,d\bf z,
 	\end{align*}
 	where the last inequality follows directly from Lemma \ref{lemma_local_C^1_parametrization} (iii). Thus
 	\begin{equation}\label{thm_main_eqn_6}
 		\dt^{-1} r^3 \le C \int_{B_{4r}(\bf x_0)} |Du_j(\bf x)|^{2} \,d\bf x.
 	\end{equation}

 	By passing to a subsequence, we may assume that $(\L^n \llcorner |Du_j|)\wslim \mu$ for some $\mu \in \M(\om)$. Since \eqref{thm_main_eqn_6} holds for all $r\in (0,R')$ (for sufficiently large $j \ge j_0(\bf x_0,\dt,r)$), we may pick $r$ such that $\mu(\de B_{4r}(\bf x_0)) = 0$, hence Lemma \ref{lemma_blow_up} can be applied. This concludes the proof.
 \end{proof}

 \paragraph{}
 Now, using the results from Section 5, we shall prove that an orientation-preserving generalised axisymmetric map that is one-to-one a.e.\ satisfies property \hyperref[defn_property_L]{(L)}. Let us recall that we define  $O_\th := \{ (r, \th, z)_{cyl} : r>0, z \in \R \}$ and $H_{z} := \{(x,y,z) \in\R^3 : x,y \in\R \}$.

 \begin{theorem}\label{thm_axisym_maps_satisfy_condition_L}
 	Let $\om \subset \R^3$ be a bounded open domain and let $u \in \W{1,2}(\om;\R^3)$ be a generalised axisymmetric map with $\det Du > 0 $ a.e. and is one-to-one a.e. Then $u$ satisfies property \hyperref[defn_property_L]{(L)}. 
 \end{theorem}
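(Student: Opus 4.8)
The plan is to verify Definition~\ref{defn_property_L} for $u$ using, at a.e.\ point $\bf x_0=(r_0,\th_0,z_0)_{cyl}$ with $r_0>0$, the family $\{T_r\}_{r\in(0,R']}$ generated by $L=L_{\bf x_0}$ from Definition~\ref{defn_parametrization_L}. The geometric point that drives everything is that $L_{\bf x_0}$ carries $O_\th$ into the plane $H_{\th-\th_0}$, so $T_r$ carries each horizontal plane $H_c$ (inside $B_4(\bf 0)$) into the single half-plane $O_{\th_0+\al r c/R}$; since $u$ is generalised axisymmetric and thus maps $O_\th$ into $O_{\Theta(\th)}$, the composition $v:=u\circ T_r$ maps $H_{a_2}\cap B_4(\bf 0)$ into the single half-plane $O_{\Theta(\th^*)}$, with $\th^*:=\th_0+\al r a_2/R$. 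For condition~(i): writing $\mu_{\bf a}(S^1)=\et^{-1}(\bf a)$ and $\nu_{\bf b}(S^1)=\td\xi^{-1}(\bf b)$ as in Section~4, and using that $J_\et$ and $J_{\td\xi}$ are bounded away from $0$ and $\oo$ on $\lbar{\Bbb A}$ and on a neighbourhood of $C_{ver}$ respectively, the coarea formula for $\et$ and $\td\xi$ together with $v\in\W{1,2}$ shows that for $\L^2$-a.e.\ $\bf a$ (resp.\ $\bf b$) one has $\res{v}_{\mu_{\bf a}(S^1)}\in\W{1,2}(S^1)\subset C^0$ (resp.\ on $\nu_{\bf b}(S^1)$); discarding these null sets produces $E_{hor},E_{ver}$.

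For condition~(ii) I will prove the sharper statement $\L(v\circ\mu_{\bf a},v\circ\nu_{\bf b})=1$ for $\L^4$-a.e.\ $(\bf a,\bf b)$, which rules out any product $\td A\times\td B$ of positive measure on which the linking number is negative. Fix such an $(\bf a,\bf b)$ and compose in the target with an orientation-preserving rotation $\mathcal R=\mathcal R_{\th^*}$ carrying the $2$-plane through the $x_3$-axis at angle $\Theta(\th^*)$ onto $\{x_3=0\}$; this does not change the linking number, and $\td v:=\mathcal R\circ v$ satisfies $\td v(z_1,z_2,a_2)=(w(z_1,z_2),0)$, where $w=w_{\th^*}$ is, up to an affine change of variables with positive Jacobian, the planar representation of $u|_{O_{\th^*}}$ in the $(\rho,z)$-coordinates of $O_{\Theta(\th^*)}$. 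Moreover $\td v_3(\bf z)=\td u_1(T_r(\bf z))\sin\!\big(\Theta(\th_0+\al r z_3/R)-\Theta(\th^*)\big)$; since $\Theta$ is strictly increasing and $\int\Theta'$ over shrinking intervals tends to $0$, for $r\le R'$ small the sine argument stays in $(-\tfrac\pi2,\tfrac\pi2)$ and, as $\td u_1>0$ a.e.\ (a consequence of $\det Du>0$ a.e.\ and $u$ being one-to-one a.e.), $\td v_3$ is positive on $\{z_3>a_2\}$ and negative on $\{z_3<a_2\}$ — which is exactly the crossing structure that the proof of Lemma~\ref{lemma_linking_number_using_2D_degree} extracts from its monotonicity hypothesis ($\nu_{\bf b}(S^1)$ meets $H_{a_2}$ only at $\td{\bf p}_{\bf a\bf b},\td{\bf q}_{\bf a\bf b}$, transversally and with opposite signs). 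Lemma~\ref{lemma_linking_number_using_2D_degree} then gives
\[
\L(v\circ\mu_{\bf a},v\circ\nu_{\bf b})=\deg\!\big(w_{\th^*},S_{\bf a},w_{\th^*}(\bf p_{\bf a\bf b})\big)-\deg\!\big(w_{\th^*},S_{\bf a},w_{\th^*}(\bf q_{\bf a\bf b})\big),
\]
with $\bf p_{\bf a\bf b}$ inside, and $\bf q_{\bf a\bf b}$ outside, the disk $D_{\bf a}=B_{a_1+2}^2(\bf 0)$ bounded by $S_{\bf a}$.

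To evaluate the two degrees I use that, by Fubini applied to the foliation $\{O_\th\}$, for a.e.\ $\th^*$ the planar map $w_{\th^*}$ lies in $\W{1,2}$, is one-to-one a.e., has $\det Dw_{\th^*}>0$ a.e., and — this is where the generalised axisymmetric hypothesis is genuinely needed — satisfies the planar divergence identities~\eqref{div_identity}. Proposition~\ref{prop_degree_for_map_satisfying_div_identities}, applied to $D_{\bf a}=B_{a_1+2}^2(\bf 0)$ for the a.e.\ radius making it a good disk, then yields $\deg(w_{\th^*},S_{\bf a},\bf y)=\#\{\bf x\in D_{\bf a}\cap\Lambda_0:w_{\th^*}(\bf x)=\bf y\}$ for a.e.\ $\bf y$; this is $0$ or $1$ because $\res{w_{\th^*}}_{\Lambda_0}$ is injective, and, being locally constant off $w_{\th^*}(S_{\bf a})$, it takes values in $\{0,1\}$ everywhere there. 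Since $\bf p_{\bf a\bf b}$ and $\bf q_{\bf a\bf b}$ depend on $(\bf a,\bf b)$ through explicit submersive formulas, a coarea argument shows that for $\L^4$-a.e.\ $(\bf a,\bf b)$ the points $\bf p_{\bf a\bf b},\bf q_{\bf a\bf b}$ lie in $\Lambda_0$, their images avoid $w_{\th^*}(S_{\bf a})$ and the null exceptional set of the degree formula, and $\td u_1>0$ there; hence $\deg(w_{\th^*},S_{\bf a},w_{\th^*}(\bf p_{\bf a\bf b}))=1$ (as $\bf p_{\bf a\bf b}\in D_{\bf a}\cap\Lambda_0$) while $\deg(w_{\th^*},S_{\bf a},w_{\th^*}(\bf q_{\bf a\bf b}))=0$ (as $\bf q_{\bf a\bf b}\notin D_{\bf a}$ and $\res{w_{\th^*}}_{\Lambda_0}$ is injective). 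Thus $\L(v\circ\mu_{\bf a},v\circ\nu_{\bf b})=1$ for a.e.\ $(\bf a,\bf b)$, giving condition~(ii), and therefore $u$ has property \hyperref[defn_property_L]{(L)}.

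The main obstacle is the assertion that the planar restrictions $w_{\th^*}$ inherit the divergence identities~\eqref{div_identity}: this is not a formal consequence of $u\in\W{1,2}(\om;\R^3)$ and must be proved by exploiting the axisymmetric structure together with the a.e.\ injectivity and the positivity of $\det Du$. A second, purely technical, difficulty is the measure-theoretic accounting needed to make the argument rigorous uniformly in $r\le R'$: one must track how the various ``a.e.'' conditions — good disks, Lebesgue points, avoidance of null sets, the sign structure of $\td v_3$, and the transversality used in Lemma~\ref{lemma_linking_number_using_2D_degree} (whose stated monotonicity hypothesis holds here only in the weaker sign-structure form) — pass from $\bf x_0$ to $\th^*=\th^*(r,\bf a)$ and then to $(\bf a,\bf b)$, which is carried out by repeated applications of Fubini's theorem and the coarea formula with the maps $\et$, $\td\xi$, $\th^*(r,\cdot)$ and $(\bf a,\bf b)\mapsto(\bf p_{\bf a\bf b},\bf q_{\bf a\bf b})$.
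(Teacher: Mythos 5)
Your proposal follows essentially the same route as the paper: the parametrization $L_{\bf x_0}$ sending half-planes $O_\th$ to horizontal planes, absolute continuity of $v$ on a.e.\ circle $\mu_{\bf a}(S^1)$, $\nu_{\bf b}(S^1)$ for condition (i), the rotation $\bf R$ and the reduction of the linking number to a difference of planar degrees via Lemma \ref{lemma_linking_number_using_2D_degree}, Proposition \ref{prop_degree_for_map_satisfying_div_identities} combined with injectivity on $\Lambda_0$ to evaluate those degrees as $1$ and $0$, and Fubini to pass from the sliced statement ($\L=1$ for a.e.\ $(\bf a,\bf b)$) to condition (ii).

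The one point you flag as the ``main obstacle'' --- that the planar restrictions $w_{\th^*}$ satisfy the divergence identities \eqref{div_identity} --- is in fact not an obstacle, and it requires neither the a.e.\ injectivity nor any further exploitation of the axisymmetric structure beyond what you have already used. Once you know (by Fubini, for a.e.\ slice) that $w_{\th^*}\in\W{1,2}(\Lambda;\R^2)$ with $\det Dw_{\th^*}>0$ a.e., you are at the critical Sobolev exponent $p=n=2$, and the identities follow from \cite[Theorem 3.2]{MullerTangYan1994} (see also \cite[Lemma 2]{Mueller1988}); this one-line citation is exactly how the paper closes that step. The generalised axisymmetric hypothesis is needed only to guarantee that the restriction of $u$ to a half-plane is a genuine planar map (image contained in a single half-plane $O_{\Theta(\th)}$) and, via $\Theta'>0$ a.e., that $\det Dw_{\th^*}>0$ a.e.\ and that the sign/transversality hypothesis of Lemma \ref{lemma_linking_number_using_2D_degree} holds; the a.e.\ injectivity enters only when counting preimages in the degree formula. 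With that citation supplied, your argument is complete and coincides with the paper's.
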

 
 \begin{proof}
 	Let $\bf x_0 = (r_0,\th_0,z_0)_{cyl} \in \om$ be a point not on the axis $\{ (0, 0, z) : z \in \R \}$ and let $\td u_1, \td  u_2$ and $\Theta$ be the maps defined in Definition \ref{defn_axisym}. Let $L=L_{\bf x_0}$ be the map defined as in Definition \ref{defn_parametrization_L} and $\{ T_r \}_{r\in(0,R']}$ be the family of diffeomorphisms generated by it according to Lemma \ref{lemma_local_C^1_parametrization}. We fix $r>0$ and let $v(\bf z)$ be defined as in Definition \ref{defn_property_L}, i.e.\
 	\[
 	v(\bf z) = u(T_r(\bf z)).
 	\]
 	Since $r$ is fixed, $T_r(\bf z) = L_{\bf x_0}^{-1}(c\bf z)$ for some constant $c>0$.
 	
 	Since $v \in \W{1,2}(B_4(\bf 0); \R^3)$, it follows that for $\L^2$-a.e. $\bf a \in B^{2}_{1/10}(\bf 0)$, the restriction of $v$ to $\mu_{\bf a}(S^1)$ is absolutely continuous (see e.g. \cite[Proposition 2.8]{MullerSpector1995} and the remarks that follow). Similarly, for $\L^2$-a.e.  $\bf b \in B^{2}_1(\bf 0) \cap B^{2}_{1/10}(\bf e_1)$, the restriction of $v$ to  $\nu_{\bf b}(S^1)$ is absolutely continuous, thus condition (i) of property \hyperref[defn_property_L]{(L)} is satisfied. We will show that for almost every $z \in (-4,4)$ and for $\L^1$-a.e.\ $\bf a$ such that $\mu_{\bf a}(S^1) \subset H_{z}$, we have
 	\[
 	\L(v \circ \mu_{\bf a}, v\circ \nu_{\bf b}) = 1
 	\]
 	for $\L^2$-a.e.\ $\bf b \in B^{2}_1(\bf 0) \cap B^{2}_{1/10}(\bf e_1)$ whenever the linking number is well-defined. This will, by Fubini's theorem, imply that condition (ii) of property \hyperref[defn_property_L]{(L)} holds for $v$.
 	
 	It is known that for almost every $z' \in (-4,4)$, $\res{v}_{H_{z'}}$ is in $\W{1,2}(\B_4(\bf 0) \cap H_{z'}; \R^3)$ (see also \cite[Proposition 2.8]{MullerSpector1995}). Since $v$ is one-to-one $\L^3$-a.e., $\res{v}_{H_{z'}}$ is one-to-one $\L^2$-a.e.\ for almost every $z' \in (-4,4)$ as well. For the rest of the proof, we shall let $z'$ be a fixed number such that  $\res{v}_{H_{z'}}$ has these properties. Since the linking number is invariant under a rotation, i.e.
 	\[
 	\L(v \circ \mu_{\bf a}, v\circ \nu_{\bf b}) = \L(\bf R\circ v \circ \mu_{\bf a},\bf R \circ v\circ \nu_{\bf b})
 	\]
 	when $\bf R \in SO(3)$), we will pick a suitable rotation $\bf R$ so that Lemma \ref{lemma_linking_number_using_2D_degree} can be applied.
 	
 	Let $\Lambda := \pi_{xy}(\B_4(\bf 0) \cap H_{z'}) \subset \R^2$. From the definition of generalised axisymmetric maps, it is clear that the image of $v(\cdot,\cdot,z')$ lies entirely in $O_{\Theta(\th_0 + cz')}$.  By performing a rotation by the matrix
 	\[
 	\bf R = \begin{bmatrix}
 		\cos \Theta(\th_0 + cz') &\sin \Theta(\th_0 + cz') &0 \\
 		0 &0 &-1 \\
 		-\sin\Theta(\th_0 + cz') &\cos\Theta(\th_0 + cz') &0
 	\end{bmatrix},
 	\]
 	the image of $(\bf R \circ v)(\cdot,\cdot,z')$ now lies in the plane $\{(x,y,0) : x,y\in \R \}$, i.e.\ $(\bf R \circ v)(\cdot,\cdot,z') = (w_1(\cdot,\cdot), w_2(\cdot,\cdot), 0)$ for a corresponding $w\colon\Lambda \to \R^2$.\footnote{It is not hard to see that $w_1(x,y) = \td u_1(r_0 + cx, \th_0 + cz',  z_0 - cy)$ and $w_2(x,y) = -\td u_2(r_0 + cx, \th_0 + cz',  z_0 - cy)$.}	A straightforward computation (see e.g.\ the appendix of \cite{HenaoRodiac2018}) shows that $\det Dv > 0$ a.e. and $\de_3 (\bf R \circ v) > 0$ a.e.\ (since $\Theta' >0$ a.e.), thus $\det Dw > 0$ a.e. Since we knew that $\res{v}_{H_{z'}}$ is in $\W{1,2}(\B_4(\bf 0) \cap H_{z'}; \R^3)$, it follows that $w \in \W{1,2}(\Lambda;\R^2)$ for all such $z'$, and hence $w$ satisfies the divergence identities \eqref{div_identity} according to \cite[Theorem 3.2]{MullerTangYan1994} (see also \cite[Lemma 2]{Mueller1988}). Note that since $\res{v}_{H_{z'}}$ is one-to-one $\L^2$-a.e., $w$ is one-to-one on $\Lambda_0$ (defined as in Definition \ref{defn_approx_diff_set}).\footnote{In fact, the result \cite{VodopyanovGoldshtein1977} by Vodop'yanov and Gol'dshtein shows that $w$ is even continuous on $\Lambda$.}
 	
 	For our fixed $z'$, we consider the set 
 	\[
 	E_{z'} := \{ \bf a = (a_1,a_2) \in B^{2}_{1/10}(\bf 0) : a_2 = z' \}
 	\]
 	and recall that $S_{\bf a} = \pi_{xy}(\mu_{\bf a}(S^1))$. A simple inspection of Definition \ref{defn_mu_a_and_nu_b} shows that for $\bf a = (a_1,a_2)$, $S_{\bf a} = \de B_{a_1+2}^2(\bf 0)$. By Proposition \ref{prop_degree_for_map_satisfying_div_identities}, we have 
 	\[
 	\deg(w, S_{\bf a}, \bf y) = \# \{ \bf x \in B_{a_1+2}^2(\bf 0) \cap \Lambda_0 : u(\bf x) = \bf y \} \quad\text{for a.e.\ }\bf y
 	\]
 	for $\L^1$-a.e.\ $\bf a \in E_{z'}$. Since $\Lambda_0$ is of full measure in $\Lambda$, it is not hard to verify that for $\L^2$-a.e.  $\bf b \in B^{2}_1(\bf 0) \cap B^{2}_{1/10}(\bf e_1)$, both $\bf p_{\bf a \bf b}$ and $\bf q_{\bf a \bf b}$ (from Definition \ref{defn_disks_and_circles_in_H_z}) belong to $\Lambda_0$. For any such $\bf b$, either $w(S_{\bf a}) \cap \{ w(\bf p_{\bf a \bf b}), w(\bf q_{\bf a \bf b}) \} \ne \emptyset$ and the linking number $\L(v \circ \mu_{\bf a}, v\circ \nu_{\bf b})$ is undefined, or $w(S_{\bf a}) \cap \{ w(\bf p_{\bf a \bf b}), w(\bf q_{\bf a \bf b}) \} = \emptyset$ and we have
 	\[ 
 	\deg(w, S_{\bf a}, w(\bf p_{\bf a \bf b})) = \# \{ \bf p_{\bf a \bf b} \} = 1
 	\]
 	since $w$ is one-to-one on $\Lambda_0$ and $\bf p_{\bf a \bf b} \in B_{a_1+2}^2(\bf 0)$, whereas
 	\[
 	\deg(w, S_{\bf a}, w(\bf q_{\bf a \bf b})) = \# \{ \} = 0
 	\]
 	since $\bf q_{\bf a \bf b}$ is the only point in $\Lambda_0$ that get mapped by $w$ to $w(\bf q_{\bf a \bf b})$, but $\bf q_{\bf a \bf b}$ is not in $ B_{a_1+2}^2(\bf 0)$. Thus, by Lemma \ref{lemma_linking_number_using_2D_degree},
 	\begin{align*}
 		 	\L(v \circ \mu_{\bf a}, v\circ \nu_{\bf b}) &= \L(\bf R\circ v \circ \mu_{\bf a},\bf R \circ v\circ \nu_{\bf b}) \\
 		 	&= \deg(w, S_{\bf a}, w(\bf p_{\bf a \bf b})) - \deg(w, S_{\bf a}, w(\bf q_{\bf a \bf b})) \\
 		 	&= 1. \numberthis \label{eqn_axisym_maps_satisfy_condition_L_1}
 	\end{align*}
 	
 	Lastly, we assume for contradiction that there exist measurable sets $\td A$ with $\L^2(\td A) >0$ and $\td B$ with $\L^2(\td B)>0$ such that 
 	\[
 	\L(v \circ \mu_{\bf a}, v \circ \nu_{\bf b}) < 0
 	\]
 	for every pair $(\bf a,\bf b) \in \td A \times \td B$. By Fubini's theorem, there exists a set $I \subset (-4,4)$ of positive measure such that for each $z \in I$, we can find another set $\td E \subset E_{z}$ of positive $\L^1$-measure such that 
 	\[
 	\L(v \circ \mu_{\bf a}, v \circ \nu_{\bf b}) < 0
 	\]
 	for all $(\bf a,\bf b) \in \td E \times \td B$. This, however, contradicts the fact that \eqref{eqn_axisym_maps_satisfy_condition_L_1} holds for $\L^1$-a.e. $\bf a \in E_z$ and $\L^2$-a.e. $\bf b$ whenever $\L(v \circ \mu_{\bf a}, v \circ \nu_{\bf b})$ is defined. Therefore, condition (ii) of property \hyperref[defn_property_L]{(L)} is satisfied and we are done.
 \end{proof}

 \paragraph{}
 Theorem \ref{thm_main} is then a direct consequence of Theorem \ref{thm_axisym_maps_satisfy_condition_L} and Theorem \ref{thm_main_for_property_L}.
 
 \begin{proof}[Proof of Theorem \ref{thm_main}]
 	Let $(u_j)_j$ be a sequence of generalised axisymmetric maps in $\W{1,2}(\om;\R^3)$ that are one-to-one a.e.\ and satisfy $\det Du_j >0$ a.e. and $u_j \wlim u$. By Theorem \ref{thm_axisym_maps_satisfy_condition_L}, each $u_j$ satisfy property \hyperref[defn_property_L]{(L)}, thus $\det Du \ge 0$ a.e.\ according to Theorem \ref{thm_main_for_property_L}.
 \end{proof}

\section{Appendix}

 \paragraph{The coarea formula:} Let $\xi\colon \R^3 \to \R^2$ be a Lipschitz function and denotes $J_\xi := \sqrt{\det ((D\xi)(D\xi)^t)}$. Let $g \colon \R^3 \to \R$ be a measurable function, then
 \[
 \int_{\R^3} g J_\xi \,d\L^3 = \int_{\R^2} \left( \int_{\xi^{-1}(\{\bf y\})} g \,d\Hd{1} \right) \,d\L^2(\bf y).
 \]  
 In particular, by letting $g = \chi_A$ for some measurable set $A \subset \R^3$, we have
 \[
 \int_{A} J_\xi \,d\L^3 = \int_{\R^2} \Hd{1}(A \cap \xi^{-1}(\{\bf y\})) \,d\L^2(\bf y).
 \]

 \paragraph{}
 Let $\om \subset \R^3$ be an open domain. For any $\mu \in \M(\om)$, we may define a maximal function
 \[
 M\!\mu(\bf x) := \sup_{r>0} \frac{\mu(B_r(\bf x) \cap \om)}{|B_r(\bf x)|}.
 \]
 It is known that $M\!\mu(\bf x) < \oo$ for a.e.\ $\bf x\in \om$. Indeed, from \cite[Theorem 1 (a), Section 3.1, chapter I]{Stein1993_HA_book}, let
 \[
 M\!f(\bf x) := \sup_{r>0} \frac{1}{|B_r(\bf x)|} \int_{\R^3} |f| \,d\mu,
 \]
 then $M\!f$ is finite a.e. We can simply take $f = \chi_\om$ in our case.

 For any sequence $(g_j)_j$ in $\Leb{1}(\om)$ such that $\sup_j \int_\om |g_j| \,dx < \oo$, its associated sequence of measures $(\L^3 \llcorner g_j)_j$ is also bounded in $\M(\om)$, hence there exists a subsequence (not relabelled) such that $(\L^3 \llcorner g_j)\wslim \mu$ for some $\mu \in \M(\om)$. Here we view $\M(\om) = C_0(\om)^*$. Recall also that, for $B_r(\bf x) \subset \om$, if $\mu(\de B_r(\bf x)) = 0$ and $\mu_j \wslim \mu$ in $\M(\om)$, then
 \[
 \mu_j(B_r(\bf x)) \to \mu(B_r(\bf x)).
 \]
 See e.g.\ \cite[Proposition 1.62 (b)]{AmbrosioFuscoPallara2000_BV_book}.
 
 \begin{lemma}\label{lemma_blow_up}
 	Let $\om\subset \R^3$ be an open domain and let $S \subset \om$ be its measurable subset. Let $(g_j)_j$ be a sequence in $\Leb{1}(\om)$ such that $(\L^n \llcorner g_j) \wslim \mu$ for some $\mu \in \M(\om)$. Suppose that for every $\bf x_0\in S$, there exists $C=C(\bf x_0)>0$ such that for any given (sufficiently small) $\dt>0$, we can find $r=r(\bf x_0,\dt)>0$ and $j_0 = j_0(\bf x_0,\dt,r) \in \Bbb N$ such that $B_{r}(\bf x_0)\subset \om$, $\mu(\de B_r(\bf x_0)) = 0$, and
 	\[
 	\dt^{-1} r^3 \le C \int_{B_r(\bf x_0)} |g_j| \,d\bf x
 	\]
 	for all $j \ge j_0$. Then $|S| = 0$.
 \end{lemma}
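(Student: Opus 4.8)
The plan is a Vitali--covering (blow-up) argument: assuming for contradiction that $|S|>0$, I would bound the outer measure of a suitable subset of $S$ by a quantity proportional to $\dt$ and then let $\dt\to0^{+}$. Two preliminary reductions come first. Since $\L^{n}\llcorner g_{j}\wslim\mu$ in $\M(\om)=C_{0}(\om)^{*}$, the uniform boundedness principle gives $M:=\sup_{j}\int_{\om}|g_{j}|\,d\bf x<\infty$; substituting this into the hypothesis shows that, for every $\bf x_{0}\in S$, any admissible radius satisfies $r(\bf x_{0},\dt)^{3}\le C(\bf x_{0})\,\dt\int_{B_{r}(\bf x_{0})}|g_{j}|\le C(\bf x_{0})\,M\,\dt$, so for each fixed $\dt$ the radii are uniformly bounded (and tend to $0$ as $\dt\to0^{+}$). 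Next, because $C(\bf x_{0})$ depends on $\bf x_{0}$ and is not known to be measurable, I would fix one admissible constant $C(\bf x_{0})$ for each $\bf x_{0}\in S$ and stratify, $S=\bigcup_{k\in\Bbb N}S_{k}$ with $S_{k}:=\{\bf x_{0}\in S:C(\bf x_{0})\le k\}$; by countable subadditivity of Lebesgue outer measure $|\cdot|^{*}$ it then suffices to prove $|S_{k}|^{*}=0$ for each $k$. (Throughout I use that $g_{j}\ge0$, which is the only case needed, with $g_{j}=|Du_{j}|^{2}$; then $|g_{j}|=g_{j}$ and $\mu\ge0$.)

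Fix $k\in\Bbb N$ and a (small) $\dt>0$. For every $\bf x_{0}\in S_{k}$ the hypothesis provides a ball $B_{r}(\bf x_{0})\subset\om$, $r=r(\bf x_{0},\dt)$, with $\mu(\de B_{r}(\bf x_{0}))=0$ and $\int_{B_{r}(\bf x_{0})}g_{j}\ge\dt^{-1}r^{3}/C(\bf x_{0})\ge\dt^{-1}r^{3}/k$ for all $j\ge j_{0}$. Since $\mu(\de B_{r}(\bf x_{0}))=0$, weak-$*$ convergence yields $\int_{B_{r}(\bf x_{0})}g_{j}\to\mu(B_{r}(\bf x_{0}))$ (e.g.\ \cite[Proposition 1.62(b)]{AmbrosioFuscoPallara2000_BV_book}), whence
\[
|B_{r}(\bf x_{0})|=\tfrac{4}{3}\pi r^{3}\ \le\ \tfrac{4}{3}\pi\,k\,\dt\,\mu\big(B_{r}(\bf x_{0})\big).
\]
The family $\{B_{r(\bf x_{0},\dt)}(\bf x_{0}):\bf x_{0}\in S_{k}\}$ covers $S_{k}$ with uniformly bounded radii, so I would apply the $5r$-covering lemma \cite{EvansGariepy1992_1st} to extract a countable pairwise disjoint subfamily $\{B_{r_{i}}(\bf x_{i})\}_{i}$ with $S_{k}\subset\bigcup_{i}B_{5r_{i}}(\bf x_{i})$. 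Then
\[
|S_{k}|^{*}\ \le\ \sum_{i}|B_{5r_{i}}(\bf x_{i})|\ =\ 5^{3}\sum_{i}|B_{r_{i}}(\bf x_{i})|\ \le\ 5^{3}\,\tfrac{4}{3}\pi\,k\,\dt\sum_{i}\mu\big(B_{r_{i}}(\bf x_{i})\big)\ \le\ 5^{3}\,\tfrac{4}{3}\pi\,k\,\mu(\om)\,\dt,
\]
the last inequality because the balls $B_{r_{i}}(\bf x_{i})\subset\om$ are pairwise disjoint and $\mu\ge0$. Letting $\dt\to0^{+}$ with $k$ and $\mu(\om)$ fixed forces $|S_{k}|^{*}=0$, hence $|S|=0$, a contradiction.

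The one point that needs genuine care is the passage $\int_{B_{r}(\bf x_{0})}g_{j}\to\mu(B_{r}(\bf x_{0}))$: it requires the sphere $\de B_{r}(\bf x_{0})$ to carry no $\mu$-mass, which is exactly the clause $\mu(\de B_{r}(\bf x_{0}))=0$ written into the hypothesis (in the application one first passes to a subsequence along which $\L^{n}\llcorner|Du_{j}|^{2}$ converges weakly-$*$, then selects the radius outside the at most countably many spheres charged by the limit). The remaining ingredients are routine: the stratification by $C(\bf x_{0})$ makes the covering constant uniform, and the $5r$-lemma converts the pointwise lower bounds $\mu(B_{r})\ge r^{3}/(k\dt)$ into the global estimate $|S_{k}|^{*}\le 5^{3}\,\tfrac{4}{3}\pi\,k\,\mu(\om)\,\dt$, which vanishes as $\dt\to0$.
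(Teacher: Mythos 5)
Your proof is correct, but it takes a genuinely different route from the paper. The paper's argument is pointwise: from the hypothesis and the convergence $\int_{B_r(\mathbf x_0)}|g_j|\,d\mathbf x\to\mu(B_r(\mathbf x_0))$ (valid because $\mu(\de B_r(\mathbf x_0))=0$), it deduces $\dt^{-1}\le C(\mathbf x_0)\,\mu(B_r(\mathbf x_0))/|B_r(\mathbf x_0)|\le C(\mathbf x_0)\,M\!\mu(\mathbf x_0)$ for every small $\dt$, hence $M\!\mu(\mathbf x_0)=\infty$ for every $\mathbf x_0\in S$; the conclusion then follows from the a.e.\ finiteness of the maximal function of a finite measure (cited from Stein), and measurability of $S_k$ never arises because a subset of a Lebesgue-null set is null. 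You instead run the covering argument directly: after stratifying by the constant $C(\mathbf x_0)$ and bounding $|B_r|\le\tfrac43\pi k\dt\,\mu(B_r)$, the $5r$-lemma and disjointness give $|S_k|^*\lesssim k\,\mu(\om)\,\dt\to0$. In effect you have inlined the proof of the maximal function theorem that the paper invokes as a black box; your version is self-contained and makes explicit two points the paper glosses over (the uniform bound on the radii needed for the $5r$-lemma, and the possible non-measurability of the strata, handled via outer measure), at the cost of being longer. Both proofs share the same implicit reduction to $g_j\ge0$ (so that $\int_{B_r}|g_j|\to\mu(B_r)$ follows from $\L^n\llcorner g_j\wslim\mu$), which you correctly flag and which holds in the application.
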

 
 \begin{proof}
 	Let $\bf x_0 \in S$. For a sufficiently small $\dt>0$, we pick $r>0$ and then $j_0 \in \Bbb N$ such that  $\mu(\de B_r(\bf x_0)) = 0$ and
 	\begin{align*}
 		\dt^{-1} \le C  \frac{1}{|B_r(\bf x)|} \int_{B_r(\bf x_0)} |g_j| \,d\bf x
 	\end{align*}
 	for all $j \ge j_0$. Taking the limit as $j \to \oo$, we have
 	\begin{align*}
 		\dt^{-1} \le C  \frac{ \mu(B_r\bf (x_0)) }{|B_r(\bf x)|} \le C M\!\mu(\bf x_0).
 	\end{align*}
 	We now let $\dt \searrow 0$ to conclude that $M\!\mu(\bf x_0) = \oo$. This shows that $S$ is a null set since $M\!\mu < \oo$ a.e.
 \end{proof}

 \begin{lemma}\label{lemma_size_of_good_set}
 	Let $f \colon B_4(\bf 0)\to \R$ be a positive measurable function and let $\et \colon B_4(\bf 0)\to \R^2$  be a Lipschitz function. Let $E \subset \R^2$ be a measurable set and $J_\et \le M$ on $\et^{-1}(E)$ for some $M > 0$. 
 	
 	For a fixed $m>0$ and any given $\dt>0$, define
 	\[
 	I :=\left\{\bf s \in E : \Hd{1}\left(\{ \res{f}_{\et^{-1}(\{\bf s\})} \ge m\dt \} \right) < 2\dt \right\}.
 	\] 
 	If $\dt >0$ is sufficiently small, then, for any given $\al\in(0,1)$,
 	\begin{equation}
 		\int_{B_4(\bf 0)} f \,d\L^3 < \dt^3, \label{lemma_size_of_good_set_eqn_0}
 	\end{equation}
 	implies that $\L^2(I) > (1-\al) \L^2(E)$.
 \end{lemma}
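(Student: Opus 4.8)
The plan is to bound the complementary set $E\setminus I$ directly, exploiting the fact that its defining property forces each corresponding fibre of $\et$ to carry a definite amount of mass of $f$, and then to convert this fibrewise information into a global integral estimate via the coarea formula of the appendix. Throughout I assume, as holds in every application, that $0<\L^2(E)<\oo$, and I recall that by the coarea formula the restriction $\res{f}_{\et^{-1}(\{\bf s\})}$ is $\Hd{1}$-measurable for $\L^2$-a.e.\ $\bf s$, so the fibre integrals below make sense; measurability of $I$ in $\R^2$ follows from applying the same coarea representation to $\chi_{\{f\ge m\dt\}}$.

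First I would record the elementary lower bound on the bad fibres: if $\bf s\in E\setminus I$, then by definition $\Hd{1}\big(\{\res{f}_{\et^{-1}(\{\bf s\})}\ge m\dt\}\big)\ge 2\dt$, and since $f>0$ this gives
\[
\int_{\et^{-1}(\{\bf s\})} f\,d\Hd{1}\ \ge\ (m\dt)\cdot(2\dt)\ =\ 2m\dt^2 .
\]
Next I would apply the coarea formula to $\et$ (extended Lipschitz-ly to $\R^3$) and the integrand $g:=f\,\chi_{\et^{-1}(E)}$, discard the nonnegative contribution of $I$ to the outer integral, and use the hypotheses $J_\et\le M$ on $\et^{-1}(E)$ and $\int_{B_4(\bf 0)}f\,d\L^3<\dt^3$:
\begin{align*}
2m\dt^2\,\L^2(E\setminus I)
&\le \int_E\Big(\int_{\et^{-1}(\{\bf s\})}f\,d\Hd{1}\Big)\,d\L^2(\bf s)
= \int_{\et^{-1}(E)} f\,J_\et\,d\L^3 \\
&\le M\int_{B_4(\bf 0)} f\,d\L^3 < M\dt^3 .
\end{align*}
Dividing by $2m\dt^2$ yields $\L^2(E\setminus I)<\dfrac{M\dt}{2m}$, a bound that degenerates as $\dt\searrow 0$.

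It then suffices to read off the smallness requirement: taking $\dt<\dfrac{2m\,\al\,\L^2(E)}{M}$ (this is what ``$\dt$ sufficiently small'' means here — the threshold is allowed to depend on $\al$, $m$, $M$ and $\L^2(E)$) gives $\L^2(E\setminus I)<\al\,\L^2(E)$, and since $I\subset E$ with $\L^2(E)<\oo$ we conclude $\L^2(I)=\L^2(E)-\L^2(E\setminus I)>(1-\al)\L^2(E)$, as claimed. I do not expect a genuine obstacle in this argument: the entire content is the single coarea computation above, and the only points needing a little care are the measurability of $I$ and of the fibre restrictions of $f$ (both furnished by the coarea formula itself) and the bookkeeping that makes the $\dt$-threshold depend on the right quantities.
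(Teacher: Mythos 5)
Your proposal is correct and follows essentially the same route as the paper: a single application of the coarea formula, the fibrewise lower bound $\int_{\et^{-1}(\{\bf s\})} f\,d\Hd{1}\ge 2m\dt^2$ on $E\setminus I$, the bound $J_\et\le M$, and the same threshold $\dt<\frac{2m\al}{M}\L^2(E)$. The only (cosmetic) differences are that you argue directly rather than by contradiction, and you multiply by $J_\et$ via the coarea formula instead of dividing by it inside the fibre integral, which if anything is slightly cleaner.
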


 \begin{proof}
 	Suppose for a contrary that $\L^2(E\bsl I) > \al \L^2(E)$, then
 	\begin{align*}
 		\int_{B_4(\bf 0)} f \,d\L^3 &=  \int_{\R^2} \left( \int_{\et^{-1}(\{\bf s\})} f \frac{1}{J_\et} \,d\Hd{1} \right) \,d\L^2(\bf s) \\
 		&\ge  \int_{E\bsl I} \left( \int_{ \{ \res{f}_{\et^{-1}(\{\bf s\})} \ge m\dt \} } f \frac{1}{J_\et} \,d\Hd{1}  \right) \,d\L^2(\bf s) \\
 		&\ge  \int_{E\bsl I} \frac{m\dt}{M} \Hd{1}\left(\{ \res{f}_{\et^{-1}(\{\bf s\})} \ge m\dt \} \right)   \,d\L^2(\bf s) \\
 		&\ge  \frac{2m\dt^2}{M} \L^2(E\bsl I) \\
 		&> \frac{2m\al\dt^2}{M}\L^2(E).
 	\end{align*}
 	However,  $\frac{2m\al\dt^2}{M}\L^2(E) > \dt^3$ whenever $\dt < \frac{2m\al}{M} \L^2(E)$, thus   \eqref{lemma_size_of_good_set_eqn_0} is violated. Therefore, we must have $\L^2(E\bsl I) \le \al \L^2(E)$ for all such $\dt>0$.
 \end{proof}

 \paragraph{Acknowledgements}
 D. Henao was supported by FONDECYT grant N. 1231401 and by Center for Mathematical Modeling, FB210005, Basal ANID Chile.

	\bibliography{Jacobian_of_generalized_axisymetric}

\begin{thebibliography}{10}

\bibitem{AmbrosioFuscoPallara2000_BV_book}
{\sc L.~Ambrosio, N.~Fusco, and D.~Pallara}, {\em Functions of bounded
  variation and free discontinuity problems}, Oxford Math. Monogr., Oxford:
  Clarendon Press, 2000.

\bibitem{Ball1977}
{\sc J.~M. Ball}, {\em Convexity conditions and existence theorems in nonlinear
  elasticity}, Arch. Ration. Mech. Anal., 63 (1977), pp.~337--403.

\bibitem{BallCurrieOlver1981}
{\sc J.~M. Ball, J.~C. Currie, and P.~J. Olver}, {\em Null {Lagrangians}, weak
  continuity, and variational problems of arbitrary order}, J. Funct. Anal., 41
  (1981), pp.~135--174.

\bibitem{BarchiesiHenaoMora-Corral2017}
{\sc M.~Barchiesi, D.~Henao, and C.~Mora-Corral}, {\em Local invertibility in
  {S}obolev spaces with applications to nematic elastomers and
  magnetoelasticity}, Arch. Ration. Mech. Anal., 224 (2017), pp.~743--816.

\bibitem{BarchiesiHenaoMora-CorralRodiac2023_harmonic}
{\sc M.~Barchiesi, D.~Henao, C.~Mora-Corral, and R.~Rodiac}, {\em Harmonic
  dipoles and the relaxation of the neo-{Hookean} energy in {3D} elasticity},
  Arch. Ration. Mech. Anal., 247 (2023), p.~46.

\bibitem{BarchiesiHenaoMora-CorralRodiac2024}
{\sc M.~Barchiesi, D.~Henao, C.~Mora-Corral, and R.~Rodiac}, {\em On the lack
  of compactness in the axisymmetric neo-{Hookean} model}, Forum Math. Sigma,
  12 (2024), p.~70.
\newblock Id/No e26.

\bibitem{BarchiesiHenaoMora-CorralRodiac2023_relaxation}
{\sc M.~Barchiesi, D.~Henao, C.~Mora-Corral, and R.~Rodiac}, {\em A relaxation
  approach to the minimisation of the neo-{H}ookean energy in 3{D}}.
\newblock SIAM J. Math. Anal., 2024.
\newblock (ArXiv:2311.02952).

\bibitem{BottTu1982}
{\sc R.~Bott and L.~W. Tu}, {\em Differential forms in algebraic topology},
  vol.~82 of Grad. Texts Math., Springer, Cham, 1982.

\bibitem{BouchalaHenclZhu2024}
{\sc O.~Bouchala, S.~Hencl, and Z.~Zhu}, {\em Weak limits of {S}obolev
  homeomorphisms are one to one}.
\newblock ArXiv:2409.01260, 2024.

\bibitem{Ciarlet1988}
{\sc P.~G. Ciarlet}, {\em Mathematical elasticity. {Volume} {I}:
  {Three}-dimensional elasticity}, vol.~20 of Stud. Math. Appl., Amsterdam
  etc.: North-Holland, 1988.

\bibitem{ContiDeLellis2003}
{\sc S.~Conti and C.~De~Lellis}, {\em Some remarks on the theory of elasticity
  for compressible {N}eohookean materials}, Ann. Sc. Norm. Super. Pisa Cl. Sci.
  (5), 2 (2003), pp.~521--549.

\bibitem{Deimling1985}
{\sc K.~Deimling}, {\em Nonlinear functional analysis}.
\newblock Berlin etc.: {Springer}-{Verlag}. {XIV}, 1985.

\bibitem{DolezalovaHenclMaly2023}
{\sc A.~Dole\v{z}alov\'{a}, S.~Hencl, and J.~Mal\'{y}}, {\em Weak {L}imit of
  {H}omeomorphisms in {$W^{1,n-1}$} and ({INV}) {C}ondition}, Arch. Ration.
  Mech. Anal., 247 (2023), p.~80.

\bibitem{DolezalovaHenclMolochanova2023}
{\sc A.~Doležalová, S.~Hencl, and A.~Molchanova}, {\em Weak limit of
  homeomorphisms in ${W}^{1,n-1}$: invertibility and lower semicontinuity of
  energy}.
\newblock ArXiv:2212.06452, 2023.

\bibitem{EvansGariepy1992_1st}
{\sc L.~C. Evans and R.~F. Gariepy}, {\em Measure theory and fine properties of
  functions}, Boca Raton: CRC Press, 1992.

\bibitem{Federer1996_GMT}
{\sc H.~Federer}, {\em Geometric measure theory.}, Class. Math., Berlin:
  Springer-Verlag, repr. of the 1969 ed.~ed., 1996.

\bibitem{FonsecaLeoniMaly2005}
{\sc I.~Fonseca, G.~Leoni, and J.~Mal{\'y}}, {\em Weak continuity and lower
  semicontinuity results for determinants}, Arch. Ration. Mech. Anal., 178
  (2005), pp.~411--448.

\bibitem{GiaquintaGiuseppeSoucek1989}
{\sc M.~Giaquinta, G.~Modica, and J.~Sou{\v{c}}ek}, {\em Cartesian currents,
  weak diffeomorphisms and existence theorems in nonlinear elasticity}, Arch.
  Ration. Mech. Anal., 106 (1989), pp.~97--159.

\bibitem{GiaquintaModicaSoucek1998_book}
\leavevmode\vrule height 2pt depth -1.6pt width 23pt, {\em Cartesian currents
  in the calculus of variations {I}. {Cartesian} currents}, vol.~37 of Ergeb.
  Math. Grenzgeb., 3. Folge, Berlin: Springer, 1998.

\bibitem{GoldsteinHajlasz2019}
{\sc P.~Goldstein and P.~Haj{\l}asz}, {\em Jacobians of {{\(W^{1,p}\)}}
  homeomorphisms, case {{\(p=[n/2]\)}}}, Calc. Var. Partial Differ. Equ., 58
  (2019), p.~28.
\newblock Id/No 122.

\bibitem{GuilleminPollack1974}
{\sc V.~Guillemin and A.~Pollack}, {\em Differential topology}.
\newblock Englewood {Cliffs}, {N}.{J}.: {Prentice}-{Hall}, {Inc}. {XVI}, 1974.

\bibitem{HenaoMora-Corral2010}
{\sc D.~Henao and C.~Mora-Corral}, {\em Invertibility and weak continuity of
  the determinant for the modelling of cavitation and fracture in nonlinear
  elasticity}, Arch. Ration. Mech. Anal., 197 (2010), pp.~619--655.

\bibitem{HenaoMora-Corral2011}
\leavevmode\vrule height 2pt depth -1.6pt width 23pt, {\em Fracture surfaces
  and the regularity of inverses for {BV} deformations}, Arch. Ration. Mech.
  Anal., 201 (2011), pp.~575--629.

\bibitem{HenaoRodiac2018}
{\sc D.~Henao and R.~Rodiac}, {\em On the existence of minimizers for the
  neo-{Hookean} energy in the axisymmetric setting}, Discrete Contin. Dyn.
  Syst., 38 (2018), pp.~4509--4536.

\bibitem{HenclMaly2010}
{\sc S.~Hencl and J.~Mal{\'y}}, {\em Jacobians of {Sobolev} homeomorphisms},
  Calc. Var. Partial Differ. Equ., 38 (2010), pp.~233--242.

\bibitem{HenclOnninen2018}
{\sc S.~Hencl and J.~Onninen}, {\em Jacobian of weak limits of {S}obolev
  homeomorphisms}, Adv. Calc. Var., 11 (2018), pp.~65--73.

\bibitem{Hirsch1976}
{\sc M.~W. Hirsch}, {\em Differential topology}, vol.~33 of Grad. Texts Math.,
  Springer, Cham, 1976.

\bibitem{IwaniecMartin2001}
{\sc T.~Iwaniec and G.~Martin}, {\em Geometric function theory and nonlinear
  analysis}, Oxford Math. Monogr., Oxford: Oxford University Press, 2001.

\bibitem{Kalayanamit2024}
{\sc P.~Kalayanamit}, {\em Sobolev regularity of the inverse for minimizers of
  the neo-{H}ookean energy satisfying condition {INV}}.
\newblock ArXiv:2405.12156, 2024.

\bibitem{Maly1995}
{\sc J.~Mal{\'y}}, {\em Examples of weak minimizers with continuous
  singularities}, Expo. Math., 13 (1995), pp.~446--454.

\bibitem{MarsdenHughes1983}
{\sc J.~E. Marsden and T.~J.~R. Hughes}, {\em Mathematical foundations of
  elasticity}.
\newblock Prentice-{Hall} {Civil} {Engineering} and {Engineering} {Mechanics}
  {Series}. {Englewood} {Cliffs}, {New} {Jersey}: {Prentice}-{Hall}, {Inc}.
  {XVIII}, 556 p. {\$} 58.00 (1983)., 1983.

\bibitem{Mueller1988}
{\sc S.~M{\"u}ller}, {\em Weak continuity of determinants and nonlinear
  elasticity. ({Continuit{\'e}} faible des d{\'e}terminants et applications
  {\`a} l'{\'e}lasticit{\'e} non lin{\'e}aire)}, C. R. Acad. Sci., Paris,
  S{\'e}r. I, 307 (1988), pp.~501--506.

\bibitem{MullerTangYan1994}
{\sc S.~M{\"u}ller, T.~Qi, and B.~S. Yan}, {\em On a new class of elastic
  deformations not allowing for cavitation}, Ann. Inst. Henri Poincar{\'e},
  Anal. Non Lin{\'e}aire, 11 (1994), pp.~217--243.

\bibitem{MullerSpector1995}
{\sc S.~M\"{u}ller and S.~J. Spector}, {\em An existence theory for nonlinear
  elasticity that allows for cavitation}, Arch. Rational Mech. Anal., 131
  (1995), pp.~1--66.

\bibitem{Rolfsen1976}
{\sc D.~Rolfsen}, {\em Knots and links}.
\newblock Mathematical {Lecture} {Series}. 7. {Berkeley}, {Ca}.: {Publish} or
  {Perish}, {Inc}., 1976.

\bibitem{Stein1993_HA_book}
{\sc E.~M. Stein}, {\em Harmonic analysis: {Real}-variable methods,
  orthogonality, and oscillatory integrals. {With} the assistance of {Timothy}
  {S}. {Murphy}}, vol.~43 of Princeton Math. Ser., Princeton, NJ: Princeton
  University Press, 1993.

\bibitem{Sverak1988}
{\sc V.~{\v{S}}ver{\'a}k}, {\em Regularity properties of deformations with
  finite energy}, Arch. Ration. Mech. Anal., 100 (1988), pp.~105--127.

\bibitem{VodopyanovGoldshtein1977}
{\sc S.~K. Vodop'yanov and V.~M. Gol'dshtein}, {\em Quasiconformal mappings and
  spaces of functions with generalized first derivatives}, Sib. Math. J., 17
  (1977), pp.~399--411.

\bibitem{Zeidler1986}
{\sc E.~Zeidler}, {\em Nonlinear functional analysis and its applications. {I}:
  {Fixed}-point theorems. {Transl}. from the {German} by {Peter} {R}.
  {Wadsack}}.
\newblock New {York} etc.: {Springer}-{Verlag}. {XXI}, 897 p. {DM} 298.00
  (1986)., 1986.

\end{thebibliography}
	
\end{document}